\theoremstyle{plain}
\newtheorem{thm}{Theorem}[section]
\newtheorem{lem}[thm]{Lemma}
\newtheorem{cor}[thm]{Corollary}
\newtheorem{prop}[thm]{Proposition}
\theoremstyle{definition}
\newtheorem{rmk}[thm]{Remark}
\newtheorem{defn}[thm]{Definition}
\newtheorem{ex}[thm]{Example}
\newcommand{\ZZ}{\mathbb{Z}}
\newcommand{\PP}{\mathbb{P}}
\newcommand{\CC}{\mathbb{C}}
\newcommand{\kk}{\ensuremath{\Bbbk}}
\newcommand{\cO}{\mathcal{O}}
\newcommand{\ann}{\operatorname{ann}}
\newcommand{\Supp}{\operatorname{Supp}}
\newcommand{\coker}{\operatorname{coker}}
\newcommand{\myparabola}[6]{
\draw[very thick, color=black] plot [domain=(#1-0.2):(#5+0.5)]
    (   {\x},{(#2*(\x-#3)*(\x-#5)/((#1-#3)*(#1-#5)))+
            (#4*(\x-#1)*(\x-#5)/((#3-#1)*(#3-#5)))+
            (#6*(\x-#1)*(\x-#3)/((#5-#1)*(#5-#3)))} );
}
\begin{document}

\newcommand{\Addresses}{{
  \bigskip

  \noindent Jiyang Gao, \textsc{Department of Mathematics, Massachusetts Institute of Technology, Cambridge, MA 02139}\par\nopagebreak
  \textit{E-mail address}: \texttt{gaojy98@mit.edu}

  \medskip

  \noindent Yutong Li, \textsc{Department of Mathematics and Statistics, Haverford College, Haverford, PA 19041}\par\nopagebreak
  \textit{E-mail address}: \texttt{yli11@haverford.edu}
  
  \medskip

  \noindent Michael C. Loper, \textsc{School of Mathematics, University of Minnesota, Minneapolis, MN 55455}\par\nopagebreak
  \textit{E-mail address}: \texttt{loper012@umn.edu}
  
  \medskip
  
  \noindent Amal Mattoo, \textsc{Department of Mathematics, Harvard College, Cambridge, MA 02138}\par\nopagebreak
  \textit{E-mail address}: \texttt{amattoo@college.harvard.edu}

}}

\title{Virtual Complete Intersections in $\mathbb{P}^1\times \mathbb{P}^1$}
\author{Jiyang Gao, Yutong Li, Michael C. Loper, and Amal Mattoo}

\subjclass[2010]{13D02; 14M25}

%%%%%%%%%%%%%%%%%%%%%%%%%%%%%%%%%%%%%%%%%%%%%%%%%%%%%%
\begin{abstract}
  The minimal free resolution of the coordinate ring of a complete intersection in projective space is a Koszul complex on a regular sequence. In the product of projective spaces $\PP^1 \times \PP^1$, we investigate which sets of points have a virtual resolution that is a Koszul complex on a regular sequence. This paper provides conditions on sets of points; some of which guarantee the points have this property, and some of which guarantee the points do not have this property.
\end{abstract}
%%%%%%%%%%%%%%%%%%%%%%%%%%%%%%%%%%%%%%%%%%%%%%%%%%%%%% 
\maketitle

\vspace{-1cm}
\section{Introduction}\label{sec:intro}
Complete intersections are a fundamental object of study in commutative algebra and algebraic geometry. In projective space $\PP_\kk^r$, a complete intersection $Y$ of codimension $t$ is defined by an ideal of codimension $t$ which can be generated by exactly $t$ elements of the ring $\kk[x_0,\ldots, x_r]$. In this case, there are hypersurfaces $H_1, \ldots, H_t$ such that $Y$ is the scheme-theoretic intersection of the $H_i$'s. Complete intersections have coordinate rings that are Cohen--Macaulay. The defining ideal of a complete intersection in $\PP^r$ is generated by a regular sequence and so the minimal free resolution of the coordinate ring is a Koszul complex \cite{peeva_graded_2011}*{Theorem~14.7}.

Unfortunately, in a product of projective spaces, the nice properties of complete intersections in $\PP^r$ are not completely captured homologically. A zero dimensional scheme $X \subset \PP^1 \times \PP^1$ is a scheme-theoretic complete intersection or virtual complete intersection if there are two polynomials $f$ and $g$ such that the ideal sheaf generated by $f$ and $g$ equals the ideal sheaf of $X$. On the other hand, $X$ is an ideal-theoretic complete intersection (in this paper we will just say complete intersection) if $I_X$, the set of all functions in the Cox ring vanishing on $X$, is generated by two elements. In $\PP^r$, these two notions are the same, but in $\PP^1 \times \PP^1$, they are not (see Example~\ref{2pts} below). This is where the virtual resolutions of \cite{2017arXiv170307631B} help. By allowing some irrelevant homology in a free complex, we expand the notion of a complete intersection via a virtual resolution, while still reaping the benefits of many properties of complete intersections. The goal of this paper is to state conditions on whether a set of points in $\PP^1 \times \PP^1$ do or do not form a virtual complete intersection (see Definition~\ref{def:vci}).

\begin{ex}\label{2pts}
    Consider the zero-dimensional reduced scheme $X \subset \PP^1 \times \PP^1$ consisting of the two points $([1:0],[0:1])$ and $([0:1],[1:0])$. The set of all functions vanishing on $X$ is
    \[I_X = \langle x_0x_1, x_0y_0, x_1y_1, y_0y_1 \rangle.\]
    However, the two polynomials $x_0y_0$ and $x_1y_1$ generate the same ideal sheaf as $I_X$ does. Therefore $X$ is a scheme-theoretic complete intersection or virtual complete intersection, but not an ideal-theoretic complete intersection.
\end{ex}

Points in $\PP^1 \times \PP^1$ have been studied in the past, but often from a point of view of studying the saturated defining ideals of these points in the Cox ring of $\PP^1 \times \PP^1$. Some results include several classifications of when both reduced and fat points in $\PP^1 \times \PP^1$ are arithmetically Cohen--Macaulay \cites{GMR, GMR-resolutions, Guardo-FatPoints, GVT08, GVT-separators, GVT-FatPoints, GVT-Minres, GVT11}. Further characterizations of points in more general products of projective spaces can be found in \cites{GVT08, GVT-separators, FGM, GVT-Minres, GVT11, VT} . In \cite{GMR}, Giuffrida, Maggioni, and Ragusa prove that points in $\PP^1 \times \PP^1$ are defined by the ideal generated by two forms of bidegree $(a,0)$ and $(0,b)$, and further, if $f$ and $g$ are two forms of any bidegree in $\PP^1 \times \PP^1$, then the ideal $\langle f, g \rangle$ is not saturated, except in this case. In this paper, we study when points have a complete intersection ideal that saturates to the defining ideal of the set of points, which is equivalent to being a virtual complete intersection. It turns out that all sets of points that are virtual complete intersections are not arithmetically Cohen--Macaulay, with the exception of points that are complete intersections (Corollary~\ref{ferrers}). While the results in this paper concentrate on points in $\PP^1 \times \PP^1$, perhaps recent results of \cite{FGM}, which uses techniques of liaison, could help find VCIs in any product of projective spaces.

\subsection*{Setup}\label{sec:notation}
Let $\kk$ be an algebraically closed field. In this paper we are concerned mostly with reduced zero-dimensional schemes in the product of projective spaces $\PP^1 \times \PP^1$ over $\kk$. The Cox ring of $\PP^1 \times \PP^1$ is the $\ZZ^2$-graded ring $S := \kk[x_0,x_1,y_0,y_1]$ where $\deg(x_i) = (1,0)$ and $\deg(y_i)=(0,1)$. The irrelevant ideal of $S$ is $B := \langle x_0,x_1 \rangle \cap \langle y_0, y_1 \rangle = \langle x_0y_0, x_0y_1, x_1y_0,x_1y_1 \rangle$. In this setting, closed subschemes are in one-to-one correspondence with $B$-saturated bihomogeneous ideals \cite{CLS}*{Proposition~6.A.7}. The $B$-saturation of an ideal $I$ is
\[I:B^\infty = \bigcup_{k \ge 0} I:B^k = \{s \in S | sB^k \subset I \text{ for some } k\}.\]
If $I \subset S$ is an ideal, then $V(I)$ denotes the subscheme of $X$ consisting of all $B$-saturated bihomogeneous prime ideals that contain $I$. On the other hand, if $X$ is a subvariety of $\PP^1 \times \PP^1$, then $I_X$ denotes the $B$-saturated bihomogenous ideal of polynomials in $S$ that vanish at every point in $X$.

We will call reduced subschemes of $\PP^1 \times \PP^1$ ``sets of points in $\PP^1 \times \PP^1$.'' The maximum number of points on a single horizontal ruling in a set of points $X$ is denoted as $m$, and the maximum number of points on a single vertical ruling is denoted as $n$.

\begin{defn}[{\cite[Definition~1.1]{2017arXiv170307631B}}]
A \textbf{\emph{virtual resolution}} for a module $S/I$ in the biprojective space $\mathbb{P}^1\times \mathbb{P}^1$ is a $\mathbb{Z}^2$-graded complex of free $S$-modules
$$F_\bullet := [F_0 \overset{\varphi_1}{\longleftarrow} F_1\overset{\varphi_2}{\longleftarrow} F_2\overset{\varphi_3}{\longleftarrow}\cdots]$$
such that for every homology modules $H_i(F_\bullet)$, it is true that $\ann (H_i(F))\supseteq B^\ell$ for some $\ell>0$, and the associated sheaves $\widetilde{S/I}$ and $\widetilde{\coker(\varphi_1)}$ are isomorphic.
\end{defn}

\begin{defn}\label{def:vci} Let $X$ be a set of points in $\PP^1 \times \PP^1$ with defining ideal $I_X$. We say $X$ is a \textbf{\emph{virtual complete intersection (VCI)}} if $S/I_X$ has a virtual resolution that is a Koszul complex $K(f,g)$ of bihomogeneous forms $f$ and $g$, where bihomogenous means every term in the polynomial has the same $x$-degree and $y$-degree.
\end{defn}

In other words, a set of points $X \subset \PP^1 \times \PP^1$ is called a virtual complete intersection or scheme-theoretic complete intersection generated by 2 forms $f$, $g \in S$ with $\deg(f) = (a,b)$ and $\deg(g) = (c,d)$ if there exists a sheaf surjection
\[0 \longleftarrow \mathcal{I}_X \longleftarrow \cO_{\PP^1 \times \PP^1}(-a,-b) \oplus \cO_{\PP^1 \times \PP^1} (-c, -d).\]
Given two curves of $\PP^1 \times \PP^1$ having no common component, $C$ of bidegree $(a,b)$ defined by equation $f = 0$, and $D$ of bidegree $(c,d)$ of equation $g = 0$, let $X = C \cap D$ be their scheme-theoretic complete intersection. The ideal $\langle f,g \rangle \subset S$ is not saturated (except in the cases $b = c = 0$ or $a = d = 0$), but we have the exact (Koszul) sequence of sheaves
\[ 0 \longleftarrow \mathcal{I}_X \longleftarrow \cO(-a,-b) \oplus \cO(-c,-d) \longleftarrow \cO(-a-c,-b-d) \longleftarrow 0.\]

Next, we review the notion of configurations as introduced in \cite[\textsection 3.2]{guardo_arithmetically_2015} and show that the property of being a VCI is not a combinatorial invariant. Points in $\PP^1 \times \PP^1$ may be placed on a grid, according to their coordinates in each copy of $\PP^1$, in the following way. There are two projections $\pi_i \colon \PP^1 \times \PP^1 \to \PP^1$:
$$\pi_1(a,b) = a \quad \text{ and } \quad \pi_2(a,b) = b.$$
Making a grid of horizontal and vertical lines, the vertical lines correspond to the first copy of $\PP^1$ and the horizontal lines correspond to the second copy of $\PP^1$. Two points $p, q \in \PP^1 \times \PP^1$ lie on the same vertical line if $\pi_1(p) = \pi_1(q)$. They lie on the same horizontal line if $\pi_2(p) = \pi_2(q)$. By permuting the horizontal and vertical lines, we arrange the points so that the number of points on each horizontal and vertical line decreases from top to bottom and from left to right, forming a \textbf{\emph{configuration}}.

For example, letting $a_i$ denote a point in the first copy of $\PP^1$ and $b_i$ denote a point in the second copy of $\PP^1$, the set of points
\[\{(a_1,b_1), (a_2, b_1), (a_3, b_1), (a_1,b_2), (a_4, b_2), (a_2,b_3),(a_5,b_4)\}\]
in $\PP^1 \times \PP^1$, can be represented as in Figure~\ref{config} (here the points are labeled, but in what follows they will not be labeled). Note that the configuration of a set of points is not unique: in Figure~\ref{config}, switching the horizontal rulings with coordinates $b_2$ and $b_3$ also yields a valid configuration. Thus, we consider two sets of points in $\PP^1\times \PP^1$ to be \textbf{\emph{equivalent up to configuration}} if they have the same configurations after permutation and relabeling of the rulings.

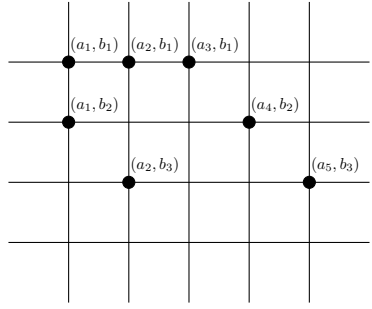
\begin{figure}
\begin{center}
  \begin{tikzpicture}[scale = 0.8]
    \draw (-1,0) -- (5,0);
    \draw (-1,1) -- (5,1);
    \draw (-1,2) -- (5,2);
    \draw (-1,3) -- (5,3);
    \draw (0,-1) -- (0,4);
    \draw (1,-1) -- (1,4);
    \draw (2,-1) -- (2,4);
    \draw (3,-1) -- (3,4);
    \draw (4,-1) -- (4,4);
    \draw[fill] (0,3) circle [radius=0.1];
    \node [above right] at (0,3) {\scalebox{0.6}{\hspace{-.17cm}$(a_1,b_1)$}};
    \draw[fill] (1,3) circle [radius=0.1];
    \node [above right] at (1,3) {\scalebox{0.6}{\hspace{-.17cm}$(a_2,b_1)$}};
    \draw[fill] (2,3) circle [radius=0.1];
    \node [above right] at (2,3) {\scalebox{0.6}{\hspace{-.17cm}$(a_3,b_1)$}};
    \draw[fill] (0,2) circle [radius=0.1];
    \node [above right] at (0,2) {\scalebox{0.6}{\hspace{-.17cm}$(a_1,b_2)$}};
    \draw[fill] (3,2) circle [radius=0.1];
    \node [above right] at (3,2) {\scalebox{0.6}{\hspace{-.17cm}$(a_4,b_2)$}};
    \draw[fill] (1,1) circle [radius=0.1];
    \node [above right] at (1,1) {\scalebox{0.6}{\hspace{-.17cm}$(a_2,b_3)$}};
    \draw[fill] (4,1) circle [radius=0.1];
    \node [above right] at (4,1) {\scalebox{0.6}{\hspace{-.17cm}$(a_5,b_3)$}};
  \end{tikzpicture}
\end{center}
\caption{An example of a configuration}
\label{config}
\end{figure}

Unfortunately, the property of being a VCI depends on the coordinates of the points, not just on their configuration. This is not so surprising as the Betti numbers of points in $\PP^1 \times \PP^1$ also depend on more than just the configuration. To illustrate this point, we use the cross ratio.

\begin{defn}[{\cite[Section 3.2 Definition 12]{ahlfors}}]
If four points in $\PP^1$ have homogeneous coordinates $[a:a'],[b:b'],[c:c'],[d:d']$, their \textbf{\emph{cross ratio}} is:
$$\frac{(ca'-ac')(db'-bd')}{(da'-ad')(cb'-bc')}.$$
\end{defn}
If a point is $[1:0]$ or $[0:1]$, then the terms involving this point are dropped from both the numerator and the denominator.\par 

\begin{figure}[H]
    \centering
    \begin{tikzpicture}[scale=.8]
    %lattice
    \foreach \x in {1,2,3,4} {
    	\draw[thin] (\x, 0.5) -- (\x, 4.5);
    	\draw[thin] (0.5, \x) -- (4.5, \x);}
    %points
    \coordinate (a) at (1,4);
    \coordinate (b) at (2,3);
    \coordinate (c) at (3,2);
    \coordinate (d) at (4,1);
    \foreach \v in {a,b,c,d}
	\fill (\v) circle (3pt);
    \end{tikzpicture}
    \caption{A four-point configuration whose the minimal free resolution depends on the coordinates.}
    \label{4-pts}
\end{figure}
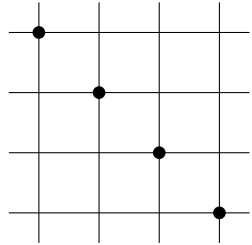
\par In Figure \ref{4-pts}, the total Betti numbers of the minimal free resolution depends on the cross ratio. Let $I$ be the ideal of bihomogeneous forms vanishing at the points. When the cross ratios of the coordinates are the same after projection to each copy of $\PP^1$, the minimal free resolution of $S/I$ (omitting the twists of the free modules) is
\begin{equation*}
    S^1\longleftarrow S^6\longleftarrow S^8\longleftarrow S^3\longleftarrow 0. 
\end{equation*}
When the cross ratios of the two copies of $\PP^1$ are different, the minimal free resolution is
\begin{equation*}
    S^1\longleftarrow S^6\longleftarrow S^7\longleftarrow S^2\longleftarrow 0.
\end{equation*}
Moreover, for any collection of points with a subconfiguration of this kind, the minimal free resolution will depend on the value of the coordinates. By contrast, this configuration is always a VCI, regardless of the cross ratios (by Theorem \ref{same-mult}).

\begin{prop}
Given the configuration of four points in Figure~\ref{4-pts}, the minimal resolution of these points depends on whether or not the cross ratios are equal after projection to each copy of $\PP^1$. 
\end{prop}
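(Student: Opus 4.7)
My plan is to show that the bigraded Betti number $\beta_{1,(1,1)}(S/I_X)$ takes different values in the two cases, from which the statement follows immediately. The core step is to determine when the bidegree $(1,1)$ piece of $I_X$ is nonzero.

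Let $F = \alpha x_0 y_0 + \beta x_0 y_1 + \gamma x_1 y_0 + \delta x_1 y_1$ be an arbitrary nonzero bihomogeneous $(1,1)$-form. When $\alpha\delta = \beta\gamma$, the form factors as a product of a linear form in $x$ with a linear form in $y$, so $V(F)$ is a union of one vertical ruling with one horizontal ruling; since the four points of $X$ have pairwise distinct first coordinates and pairwise distinct second coordinates, such a curve contains at most two of them. Hence any $F \in (I_X)_{(1,1)}$ satisfies $\alpha\delta \neq \beta\gamma$, and $V(F)$ is the graph of a M\"obius transformation $\phi\colon \PP^1 \to \PP^1$. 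The curve passes through $(a_i,b_i)$ for all $i$ if and only if $\phi(a_i) = b_i$ for $i = 1, \ldots, 4$, and since a M\"obius transformation is determined by its values on any three points, such a $\phi$ exists exactly when the cross ratio of $(a_1,\ldots,a_4)$ equals that of $(b_1,\ldots,b_4)$.

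Therefore $\dim_\kk (I_X)_{(1,1)}$ equals $1$ when the cross ratios agree and $0$ otherwise. Because $I_X$ contains no nonzero elements of bidegree $(1,0)$ or $(0,1)$, any $(1,1)$-form in $I_X$ must be a minimal generator, so $\beta_{1,(1,1)}(S/I_X)$ takes different values in the two cases. Equivalently, the Hilbert function of $S/I_X$ at bidegree $(1,1)$ is $3$ in the first case and $4$ in the second, so the two minimal free resolutions cannot coincide.

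To verify the precise total ranks $(1,6,8,3)$ and $(1,6,7,2)$ stated just above the proposition, one would continue with a bidegree-by-bidegree dimension count. In the cross-ratios-agree case, restricting forms to the M\"obius curve and using that four general points on $\PP^1$ impose independent conditions on forms of degree $\geq 3$ identifies minimal generators in bidegrees $(1,1)$, $(2,2)$, $(1,3)$, $(3,1)$, $(0,4)$, and $(4,0)$; in the other case one obtains two minimal generators in each of $(1,2)$ and $(2,1)$, plus one each in $(0,4)$ and $(4,0)$. The main obstacle at this point is the bookkeeping required to enumerate first and second syzygies, but these follow by presenting the syzygy module directly from the chosen generators and then iterating.
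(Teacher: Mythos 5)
Your proof is correct and rests on the same key observation as the paper's: the bidegree $(1,1)$ piece of $I_X$ is nonzero exactly when the two cross ratios agree, so the Hilbert function (hence the minimal free resolution) differs between the two cases. The paper reaches this by normalizing coordinates and exhibiting the single form $x_0y_1 - x_1y_0$, while you classify all $(1,1)$-forms via M\"obius transformations; this is a cosmetic difference, and your final paragraph on the exact Betti numbers is not needed for the statement.
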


\begin{proof}
We may change coordinates so that three of the four points are $[0:1],[1:1],[1:0]$ and the last point is $[1:c]$, where $c$ is the cross ratio \cite[Section 3.2 Definition 12]{ahlfors}. Now consider the form $x_0y_1-x_1y_0$. If the cross ratios on both copies of $\PP^1$ are the same, the form $x_0y_1-x_1y_0$ vanishes, which explains the degree differences in the minimal free resolutions in the two cases - one has a $(1,1)$ graded piece whereas the other has two $(1,2)$ forms. Since the Hilbert function is recoverable from the minimal free resolution, the minimal free resolution changes accordingly.
\end{proof}

Virtual resolutions are also not invariant under configurations when the total number of points is large relative to the maximum number of points lying on a single horizontal ruling and a single vertical ruling, which we respectively denote by $m$ and $n$.

\begin{rmk}\label{hyperbola}
When $|X|\ge mn$, VCIs are not always determined by configuration. That is, the same configuration may be a VCI with some coordinates, but not with others. For example, the configuration below is a VCI when either the four rightmost points or the four bottommost points lie on a $(1,1)$-form. If these points do not lie on such a conic, the configuration is not a VCI. This example is explored in more detail in Theorem~\ref{badconfiguration}, after the necessary machinery has been developed.
\end{rmk}

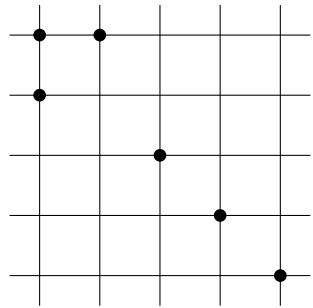
\begin{figure}[H]
  \centering
  \begin{tikzpicture}[scale=.8]
%lattice
\foreach \x in {2,3,4,5,6}
	\draw[thin] (\x, 1.5) -- (\x, 6.5);
\foreach \y in {2,3,4,5,6}
	\draw[thin] (1.5, \y) -- (6.5, \y);
%points
\coordinate (a) at (2,5);
\coordinate (b) at (2,6);
\coordinate (c) at (3,6);
\coordinate (d) at (4,4);
\coordinate (e) at (5,3);
\coordinate (f) at (6,2);
\foreach \v in {a,b,c,d,e,f}
	\fill (\v) circle (3pt);
  \end{tikzpicture}
  \caption{Whether this configuration is a VCI depends on the coordinates of points.}
  \label{fig:badconfig}
\end{figure}

However, the configuration above is far from being an Arithmetically Cohen--Macaulay set of points or a complete intersection in \cite[Theorem 4.11, Theorem 5.13]{guardo_arithmetically_2015}, whose criteria depend only on the combinatorial configuration and not the actual coordinates of the points. Hence the question of when sets of points form VCIs is another interesting and subtle question.

\subsection*{Summary of Main Results}
Complete intersections are always VCI, however, VCIs form a strictly larger set of points than complete intersections. Our main results are summarized below.

\begin{enumerate}
    \item A set of points is a VCI when it has the same number of points in each vertical (or each horizontal) ruling (Theorem \ref{same-mult}).
    \item A set of points $X$ is not a VCI when
    \begin{enumerate}
    \item $|X|<mn$, and there is at least one point in $X$ that is on a horizontal ruling with $m$ points and a vertical ruling with $n$ points (Theorem \ref{cross}).
    \item $|X|<mn$ and $\gcd(m,n)$ does not divide $|X|$ (Theorem \ref{gcd-thm}).
    \item The degrees of two forms that intersect at $X$ are known and one of the conditions in Proposition~\ref{num-theory-prop} holds.
    \end{enumerate}
    \item VCIs are not solely determined by the configuration of the points, which is a characterization of where points lie in relation to each other: when $|X| > mn$, the actual coordinates of points can play a role in determining whether or not $X$ is a VCI (Remark~\ref{hyperbola}, Theorem~\ref{badconfiguration}). 
    \item When all points lie on at most three vertical or horizontal rulings, we provide a complete classification of VCIs (Section~\ref{sec-compl-cla}).
\end{enumerate}

\begin{ex}\label{3ptVCI} Consider Figure \ref{VCI-not-CI} of the three points
\[X = \{([1:0],[0:1]),([1:1],[0:1]),([0:1],[1:1])\} \subset \PP^1 \times \PP^1.\]
  
Letting $I_X$ be the $B$-saturated ideal of bihomogenous polynomials vanishing at $X$, the minimal free resolution of $S/I_X$ is
\[0 \longleftarrow S \longleftarrow \begin{matrix} S(0,-2) \\ \oplus \\ S(-1,-1) \\ \oplus \\ S(-2,-1) \\ \oplus \\ S(-3,0) \end{matrix}
\longleftarrow \begin{matrix} S(-1,-2) \\ \oplus \\ S(-2,-2) \\ \oplus \\ S(-3,-1)^2 \end{matrix}
\longleftarrow S(-3,-2) 
\longleftarrow 0.\]
It is well known that $X$ is
not Arithmetically Cohen--Macaulay by the criterion in \cite[Theorem~4.11]{guardo_arithmetically_2015}. Therefore $X$ does not form a complete intersection. As the picture indicates, however, $X$ is a VCI as it is the intersection of the varieties of two forms
\[f = x_1y_1 \hspace{0.5cm} \text{and} \hspace{0.5cm} g = x_0(x_1 - x_0)(y_1 - y_0).\]
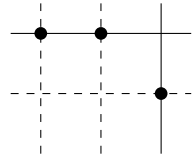
\begin{figure}[H]
    \centering
    \begin{tikzpicture}[scale=.8]
    \draw[thin] (6.5, 3) -- (9.5,3);
    \draw[thin] (9, 1) -- (9,3.5);
    \draw[thin, dashed] (6.5, 2) -- (9.5,2);
    \draw[thin, dashed] (7, 1) -- (7,3.5);
    \draw[thin, dashed] (8, 1) -- (8,3.5);
    \coordinate (d) at (7,3);
    \coordinate (e) at (8,3);
    \coordinate (f) at (9,2);
    \foreach \v in {d,e,f}
	\fill (\v) circle (3pt);
    \end{tikzpicture}
    \caption{A $3$-point variety that is generated by two forms geometrically but not a complete intersection.}
    \label{VCI-not-CI}
\end{figure}

Therefore, $K(f,g)$ is a virtual resolution of $S/I_X$, where

\[K(f,g) :=  [0 \longleftarrow S \xleftarrow{\begin{bmatrix} f & g \end{bmatrix}} \begin{matrix} S(-1,-1) \\ \oplus \\ S(-2,-1) \end{matrix}
  \xleftarrow{\begin{bmatrix} -g \\ f \end{bmatrix}} S(-3,-2)
  \longleftarrow 0].\]

Although the points do not form a complete intersection, they nonetheless share similar properties with complete intersections. The saturation of $\langle f, g \rangle$ by the irrelevant ideal $B$ is equal to $I_X$ so $V(f) \cap V(g) = X$ scheme-theoretically.
\end{ex}

All theorems proved in this paper are from the virtual viewpoint. That is we are looking for ideals generated by two forms that saturate to the defining ideal of points. This is a slightly different problem than the one studied in \cite{guardo_arithmetically_2015}, which is concerned with when sets of points $X \subset \PP^1 \times \PP^1$ have $B$-saturated ideals $I_X$ so that $S/I_X$ is Cohen--Macaulay. By the Auslander--Buchsbaum formula, this occurs exactly when the minimal free resolution of $S/I_X$ is of length 2. It could be asked when sets of points are ``virtually arithematically Cohen--Macaulay.'' This question has already been answered: all sets of points in $\PP^1 \times \PP^1$ have virtual resolutions of length 2 \cite[Theorem~1.5]{2017arXiv170307631B}. As such in this paper, we concentrate only on when sets of points are VCIs.

\subsection*{Outline}
In Section~\ref{sec-set-theoretic}, it is proved that every set of points forms a VCI when considered set-theoretically. Then, in Sections~\ref{sec-main-results}  and ~\ref{sec-nt-bounds}, we examine the scheme-theoretic case of reduced points. The majority of the proofs of our main theorems are in these sections. We name many conditions which are guaranteed to either give rise to or never give rise to VCIs. Finally, Section \ref{sec-compl-cla} is an application of these results, giving a complete classification of VCIs that lie on at most three horizontal (or vertical) rulings.

\subsection*{Acknowledgements}
\par We are grateful to Christine Berkesch for her extensive guidance and feedback throughout this project, as well as Vic Reiner for many helpful discussions. We would also like to thank the University of Minnesota--Twin Cities for organizing and supporting the combinatorics REU in 2018. The use of \emph{Macaulay2} \cite{M2} was instrumental in creating examples and testing conjectures throughout work on this project. Finally, we would like to thank two anonymous referees for notes on improving the readability of this paper and for identifying related references. This work was funded by NSF RTG grant DMS-1745638.

%%%%%%%%%%%%%%%%%%%%%%%%%%%%%%%%%%%%%%%%%%%%%%%%%%%%%%
\section{Set-Theoretic VCIs}\label{sec-set-theoretic}
In this section, we consider zero-dimensional subschemes of $\PP^1 \times \PP^1$. We show that set-theoretically, all such subschemes (hence all corresponding configurations) are virtual complete intersections.
\begin{thm}\label{set-theoretic}
  For any zero-dimensional subscheme $X$ of $\PP^1 \times \PP^1$, there is an ideal $J$ so that $\sqrt{J} = \sqrt{I_X}$ and $S/J$ has a virtual resolution that is a length two Koszul complex.
\end{thm}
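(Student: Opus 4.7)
The plan is to construct explicit bihomogeneous forms $f,g\in S$ whose common zero locus in $\PP^1\times\PP^1$ equals the underlying reduced set of points of $X$ and which form a regular sequence, then take $J$ to be the $B$-saturation $\langle f,g\rangle : B^\infty$. Since $\sqrt{I_X}=I_{X_{\mathrm{red}}}$, there is no loss in assuming $X$ is reduced; write $X=\{p_1,\dots,p_N\}$. Let $b_1,\dots,b_t$ be the distinct $y$-coordinates appearing in $X$, and for each $j$ set $X_j=\{p\in X:\pi_2(p)=b_j\}$, $k_j=|X_j|$, and $m:=\max_j k_j$.

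First I would put $g:=\prod_{j=1}^t L_j(y_0,y_1)$, a form of bidegree $(0,t)$, where $L_j$ is the linear $y$-form vanishing on the horizontal ruling through $b_j$. Next, for each $j$ I let $f_j(x_0,x_1)$ be a homogeneous polynomial of degree $m$ whose zero set in $\PP^1$ is exactly the set of $x$-coordinates of $X_j$; explicitly, take the product of the $k_j$ linear $x$-forms vanishing at those points, and raise one of them to an additional power $m-k_j$ so that $\deg f_j=m$. Lagrange interpolation at the $t$ distinct $y$-values $b_1,\dots,b_t$ then produces a bihomogeneous form $f$ of bidegree $(m,t-1)$ satisfying $f(x_0,x_1,b_{j,0},b_{j,1})=f_j(x_0,x_1)$ for every $j$.

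The verifications are straightforward. First, $V(f)\cap V(g)=X$ set-theoretically in $\PP^1\times\PP^1$: on the ruling $\{y=b_j\}\subset V(g)$ the form $f$ restricts to $f_j$, which cuts out precisely the $x$-coordinates of $X_j$. Second, $\gcd(f,g)=1$ in the UFD $S$: every irreducible factor of $g$ is one of the distinct linear forms $L_j$, and $L_j\mid f$ would force $f_j\equiv 0$, contradicting the construction. Therefore $(f,g)$ is a regular sequence, and the Koszul complex $K(f,g)$ is an exact resolution of $S/\langle f,g\rangle$; its higher homology vanishes outright, so it is trivially $B$-torsion.

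Finally set $J:=\langle f,g\rangle : B^\infty$. Because $B$-saturation preserves the associated sheaf, $\widetilde{S/J}\cong\widetilde{S/\langle f,g\rangle}=\widetilde{\coker(\varphi_1)}$, so $K(f,g)$ is a virtual resolution of $S/J$. The minimal primes of $\langle f,g\rangle$ are the geometric primes $\mathfrak{m}_p$ for $p\in X$ together (when $t\ge 2$) with the single irrelevant prime $\langle y_0,y_1\rangle$; the other irrelevant candidate $\langle x_0,x_1\rangle$ never arises, because $g$ has $x$-degree $0$. Saturating by $B$ strips the $\langle y_0,y_1\rangle$ component, leaving $\sqrt{J}=\bigcap_{p\in X}\mathfrak{m}_p = I_X = \sqrt{I_X}$. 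The main obstacle to anticipate is precisely this bookkeeping of irrelevant primes: one cannot take $J=\langle f,g\rangle$ directly (for $t\ge 2$ it acquires an extraneous minimal prime at $\langle y_0,y_1\rangle$), and the asymmetric choice of bidegrees $(m,t-1)$ and $(0,t)$ is what guarantees only a single irrelevant prime can appear and that it drops out cleanly under $B$-saturation.
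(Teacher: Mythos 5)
Your construction is essentially the paper's own proof with the two $\PP^1$ factors swapped: the paper takes $f=\ell_1\cdots\ell_k$ through the rulings and $g=\sum_i \tfrac{f g_i}{\ell_i}$, which is exactly your Lagrange interpolation of the per-ruling forms $f_j$ (padded to a common degree by repeating factors), so the two arguments coincide up to normalization. Your additional bookkeeping — verifying $\gcd(f,g)=1$ so that $K(f,g)$ is genuinely a Koszul resolution, and tracking that only the irrelevant prime $\langle y_0,y_1\rangle$ can appear and is removed by $B$-saturation when defining $J=\langle f,g\rangle:B^\infty$ — is correct and in fact makes explicit some details the paper leaves implicit.
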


\begin{proof}
  Let $\Supp(X)$ be the underlying set of points in $\PP^1 \times \PP^1$. We will show that there is an ideal generated by two bihomogenous forms $f$ and $g$ so that $\Supp(V(f, g)) = \Supp(X)$. 
  
  Let $k$ be the number of distinct vertical rulings containing points of $\Supp(X)$, and let $\ell_i$ be the $(1,0)$-form defining the $i^{th}$ of these vertical rulings. Set $f=\ell_1 \cdots \ell_k$, so $V(f)$ is the smallest union of vertical rulings containing the set of points.
  
  Next, let $n$ be the maximum number of points of $\Supp(X)$ that lie on a single vertical ruling. For each vertical ruling $i$, let $g_i$ be a $(0,n)$ form that is the the product of $n$ $(0,1)$-forms such that $\Supp(X\cap V(\ell_i))=\Supp(V(g_i)\cap V(\ell_i))$. That is, we multiply the forms defining horizontal rulings containing points of the set on $V(\ell_i)$, and repeat some of them so that the degree is $(0,n)$. Set $g=\sum_{i=1}^{k}\frac{fg_i}{l_i}$.
  
  Now we check that $\Supp(X\cap V(\ell_i))=\Supp(V(g)\cap V(\ell_i))$. Let $p$ be contained in the left hand side, which we saw is equal to $\Supp(V(g_i)\cap V(\ell_i))$. We have $g_i(p)=0$ and $\frac{f}{\ell_j}(p)=0$ for $j\neq i$, so $g(p)=0$. This shows $\Supp(X\cap V(\ell_i))\subseteq\Supp(V(g)\cap V(\ell_i))$. 
  
  To show $\Supp(X\cap V(\ell_i))\supseteq\Supp(V(g)\cap V(\ell_i))$, suppose $q$ is contained in the right hand side. Then $g(q)=0$ forces $g_i(q)=0$ since $\frac{f}{\ell_j}$ vanishes if and only if $i\neq j$. Therefore $q \in \Supp(V(g_i)\cap V(\ell_i)) = \Supp(X\cap V(\ell_i))$, and thus $\Supp(X\cap V(\ell_i))=\Supp(V(g)\cap V(\ell_i))$.
  
  Taking the union over all $i$ yields $\Supp(X)=\Supp(V(f)\cap V(g))=\Supp(V(f,g))$ as desired.
  
\end{proof}

\begin{ex}
\label{ex:set-theoretic}
We demonstrate the procedure described in the proof of Theorem~\ref{set-theoretic} to show that the configuration of six points in Figure~\ref{fig:fatpoints} is a set-theoretic virtual complete intersection.

Set
\[\ell_1 = x_0, \hspace{1cm} \ell_2 = x_0 - x_1, \hspace{1cm} \ell_3 = x_0 - 2x_1.\]
We can then choose the $g_i$ to be
\begin{align*}
    g_1 &= (x_1 - x_0)(x_0 - 2x_1)y_0^3, \\
    g_2 &= x_0(x_0 - 2x_1)y_0(y_0 - y_1)^2, \\
    g_3 &= x_0(x_0-x_1)y_0(y_0 - y_1)(y_0-2y_1).
\end{align*}
Letting $f=\ell_1\ell_2\ell_3$ and $g = g_1 + g_2 + g_3$, then $V(f,g)$ is exactly the 6 points in the figure, however, $([0:1],[0:1])$ is counted with multiplicity 2 and $([1:1],[1:1])$ is counted with multiplicity 3.
\end{ex}

\begin{rmk}\label{vis}
Notice that this procedure makes the sum of the multiplicities of points on each horizontal ruling equal. Perhaps an easier way to visualize that this configuration is a set theoretic VCI is visualizing the points with the above multiplicities as the scheme-theoretic intersection of the dotted curves indicated in Figure~\ref{fig:fatpoints},
$$V(x_0(x_0-x_1)(x_0-2x_1))$$
which is the vanishing set of a $(3,0)$ form, and the dashed curves,
$$V((y_0)(2x_1^2y_0+x_0^2y_1-3x_0x_1y_1)(x_0y_1-x_1y_0))$$
which is the vanishing set of a $(3,3)$ form. By \cite[\textsection 4.2.1]{shafarevich_basic_2013} (see Theorem~\ref{bezout}), the intersection should have order $9$, but the curves intersect at $([0:1],[0:1])$ with multiplicity $3$ and at $([1:1],[1:1])$ with multiplicity $2$, so the intersection set is indeed $6$ points. 
\begin{figure}
    \centering
    \begin{tikzpicture} 
        %lattice
        \foreach \x in {0,1,2}
        	\draw[thin] (\x, -0.5) -- (\x,2.5);
        \foreach \y in {0,1,2}
        	\draw[thin] (-0.5, \y) -- (2.5, \y);
        %points
        \coordinate (a) at (0,0);
        \coordinate (b) at (1,0);
        \coordinate (c) at (1,1);
        \coordinate (d) at (2,0);
        \coordinate (e) at (2,1);
        \coordinate (f) at (2,2);
        \draw[very thick, densely dotted] (0, -0.5) -- (0, 2.5);
        \draw[very thick, densely dotted] (1, -0.5) -- (1, 2.5);
        \draw[very thick, densely dotted] (2, -0.5) -- (2, 2.5);
        \draw[ultra thick, dashed] (-0.5, 0) -- (2.5, 0);
        \draw[ultra thick, dashed] plot [domain=-0.5:2.5]
        ({\x},{\x});
        \draw[ultra thick, dashed] plot [domain=-0.25:2.5]
        ({\x},{-0.5*(\x)^2+1.5*(\x)});
        \foreach \v in {a,b,c,d,e,f}
        	\fill (\v) circle (3.5pt);
        \node[anchor=west, inner sep=2pt] at (2.5, 0) {\scriptsize{\([0:1]\)}};
        \node[anchor=west, inner sep=2pt] at (2.5, 1) {\scriptsize{\([1:1]\)}};
        \node[anchor=west, inner sep=2pt] at (2.5, 2) {\scriptsize{\([2:1]\)}};
         \node[anchor=north, inner sep=2pt] at (0, -0.5) {\scriptsize{\([0:1]\)}};
        \node[anchor=north, inner sep=2pt] at (1, -0.5) {\scriptsize{\([1:1]\)}};
        \node[anchor=north, inner sep=2pt] at (2, -0.5) {\scriptsize{\([2:1]\)}};
    \end{tikzpicture}
    \caption{Configuration from Example \ref{ex:set-theoretic}, forms from Remark \ref{vis}}
    \label{fig:fatpoints}
\end{figure}
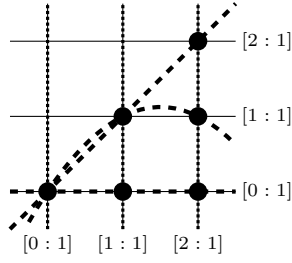
\end{rmk}

%%%%%%%%%%%%%%%%%%%%%%%%%%%%%%%%%%%%%%%%%%%%%%%%%%%%%%
\section{Determination of VCIs}\label{sec-main-results}
We now consider reduced zero-dimensional subschemes of $\PP^1 \times \PP^1$, which we refer to as ``sets of points.'' This requires that the homogeneous ideal generated by the two forms equal $I_X$ after saturation by $B$, instead of first taking the radical and then saturating by $B$, which leads to a richer classification of configurations into VCIs, non-VCIs, and coordinate dependent cases.

In the previous section, we proved that set-theoretically all configurations are VCIs by assigning multiplicities so that along each ruling, there are the same total multiplicity of points. When this condition is satisfied without having to artificially ``boost up" the multiplicity of any point, we have a natural environment for VCIs.

\begin{thm}\label{same-mult}
If $X$ has the same number of points in each vertical (or each horizontal) ruling, it is a VCI.
\end{thm}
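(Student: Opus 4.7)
The plan is to reduce by symmetry to the case where $X$ has exactly $n$ points in each of $k$ distinct vertical rulings (so $|X|=kn$), and then to write down explicit bihomogeneous forms $f,g$ whose Koszul complex serves as a virtual resolution of $S/I_X$. Let $\ell_1,\dots,\ell_k$ be the $(1,0)$-forms cutting out the $k$ vertical rulings that contain points of $X$, and set $f=\ell_1\cdots\ell_k$, a form of bidegree $(k,0)$. For each $i$, let $g_i$ be the product of the $n$ distinct $(0,1)$-forms defining the horizontal rulings through the $n$ points of $X$ on the $i$-th vertical line, so $\deg g_i=(0,n)$. Imitating the construction in Theorem~\ref{set-theoretic} but without boosting multiplicities, define $g=\sum_{i=1}^{k}\tfrac{f}{\ell_i}\,g_i$, a bihomogeneous form of bidegree $(k-1,n)$.

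I would then check two claims. First, the set-theoretic intersection satisfies $\Supp V(f,g)=X$: on $V(\ell_i)$ every summand of $g$ except $\tfrac{f}{\ell_i}g_i$ vanishes, while $\tfrac{f}{\ell_i}$ is nonvanishing on $V(\ell_i)$, so $g$ vanishes on $V(\ell_i)$ precisely at the zeros of $g_i$, which are exactly the $n$ points of $X$ on that line. Second, $f$ and $g$ share no nonconstant common factor in the UFD $S$: any such factor would have to involve some $\ell_j$, but $\ell_j\mid g$ would force $\ell_j$ to divide the unique summand of $g$ not already divisible by $\ell_j$, namely $\tfrac{f}{\ell_j}g_j$, and hence to divide $g_j$, which is impossible since $g_j$ is a polynomial in the $y$-variables alone. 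Thus $f,g$ form a regular sequence in $S$.

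To upgrade the set-theoretic equality to the scheme-theoretic equality $V(f,g)=X$, I would invoke B\'ezout's theorem for $\PP^1\times\PP^1$ (stated below as Theorem~\ref{bezout}): a curve of bidegree $(k,0)$ and a curve of bidegree $(k-1,n)$ with no common component meet in $k\cdot n+0\cdot(k-1)=kn$ points counted with multiplicity. Since the support already contains $|X|=kn$ distinct points, every intersection point must be reduced, so $V(f,g)=X$ as schemes and $\langle f,g\rangle:B^\infty=I_X$. Because $f,g$ is a regular sequence, the Koszul complex $K(f,g)$ has vanishing homology in positive degree (so higher homologies are trivially $B$-annihilated), and $\coker\varphi_1=S/\langle f,g\rangle$ sheafifies to $\widetilde{S/I_X}$. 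Hence $K(f,g)$ is a virtual resolution of $S/I_X$, exhibiting $X$ as a VCI.

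I expect the main obstacle to be this last step: promoting the easy set-theoretic equality $\Supp V(f,g)=X$ to a scheme-theoretic equality, which requires controlling multiplicities and is where B\'ezout is essential. The pointwise vanishing check on each $V(\ell_i)$ and the regular-sequence argument are routine, and once multiplicities are pinned down the virtual resolution conclusion is automatic. A small bookkeeping point is confirming that $g$ is a nonzero bihomogeneous form of bidegree $(k-1,n)$ (nonzero because at any point of $V(\ell_i)$ outside $X$ the sum reduces to the single nonzero term $\tfrac{f}{\ell_i}g_i$), and that the case of equal row-counts follows by the obvious symmetry exchanging the two $\PP^1$ factors.
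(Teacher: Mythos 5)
Your construction of $f=\ell_1\cdots\ell_k$ and $g=\sum_i \tfrac{f}{\ell_i}g_i$ is exactly the paper's, which likewise reuses the proof of Theorem~\ref{set-theoretic} and notes that when every vertical ruling carries exactly $n$ points no multiplicities need to be boosted. The only (harmless) difference is how reducedness of $V(f,g)$ is certified: the paper observes directly that on each $V(\ell_i)$ the scheme $V(g_i)\cap V(\ell_i)$ is $n$ distinct reduced points, while you deduce the same thing by comparing the B\'ezout count $kn$ with the $kn$ distinct points of the support; both are valid, so your proposal is correct and essentially the paper's argument.
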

\begin{proof}
By symmetry, it is enough to prove the vertical case. The proof is nearly identical to the proof of Theorem \ref{set-theoretic}. In the notation of that proof, we construct $f$ and $g$ as before, noting that each $V(l_i)$ contains $n$ points of $\Supp(X)$, so $V(g_i)\cap V(l_i)$ is supported on $n$ distinct points each having multiplicity $1$. Therefore, the computations in the set-theoretic case of the support are exactly the same as in the scheme-theoretic case, showing that $X=V(f)\cap V(g)=V(f,g)$, and thus that $X$ is a VCI.
\end{proof}

Notice that Example~\ref{3ptVCI} illustrates this theorem. On the other hand, Theorem~\ref{cross} below names conditions that guarantee when a set of points can never be a VCI.

\begin{thm}\label{cross}
Let $X$ be a set of points in $\PP^1\times \PP^1$. Let $m$ be the maximum number of points of $X$ on a single horizontal ruling, and let $n$ be the maximum number of points on a single vertical ruling. If $|X|<mn$, and there is at least one point in $X$ that is on a horizontal ruling with $m$ points and a vertical ruling with $n$ points, then $X$ is not a VCI.  
\end{thm}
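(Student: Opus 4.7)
The plan is to argue by contradiction. Suppose $X$ is a VCI defined by forms $f$ of bidegree $(a,b)$ and $g$ of bidegree $(c,d)$. Because $X$ is $0$-dimensional and reduced, $f$ and $g$ share no common factor, so Bezout's theorem in $\PP^1\times\PP^1$ (Theorem~\ref{bezout}), combined with reducedness, gives the key identity $|X|=ad+bc$. The goal is then to show that the hypothesis on $p$, $L_h$, and $L_v$ forces $ad+bc\geq mn$, contradicting $|X|<mn$.

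Next I would let $\ell_h$ and $\ell_v$ be the $(0,1)$- and $(1,0)$-forms cutting out $L_h$ and $L_v$, and perform a case analysis based on which of $\ell_h, \ell_v$ divide which of $f, g$. Neither $\ell_h$ nor $\ell_v$ can divide both $f$ and $g$, else the two forms would share a common factor. For the ruling $L_h$: if $\ell_h$ divides neither $f$ nor $g$, then $f|_{L_h}$ and $g|_{L_h}$ are nonzero forms on $L_h\cong\PP^1$ of $x$-degrees $a$ and $c$, and each must vanish at all $m$ points of $X\cap L_h$, forcing $a\geq m$ and $c\geq m$; if instead $\ell_h \mid f$ (and so $\ell_h \nmid g$), then the $m$ points of $X\cap L_h$ must all be zeros of $g|_{L_h}$, so $c\geq m$. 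A symmetric analysis for $L_v$ yields the analogous constraints on $b$ and $d$.

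The main subtlety I expect is excluding the possibility that both $\ell_h$ and $\ell_v$ divide the same form (say both divide $f$). In that case $V(f)$ contains two distinct rulings meeting at $p$, so $V(f)$ has a node at $p$; since $p\in X\subset V(g)$, a local intersection-multiplicity computation at $p$ shows that the length of $\mathcal{O}_{V(f,g),p}$ is at least $2$, contradicting reducedness of $X=V(f,g)$. With this possibility ruled out, the surviving configurations organize into three cases that I would check directly: (i) no ruling through $p$ is contained in $V(f)\cup V(g)$, giving $a,c\geq m$ and $b,d\geq n$, hence $ad+bc\geq 2mn$; (ii) exactly one ruling is contained in exactly one form, say $\ell_h\mid f$ only, giving $c\geq m$, $b\geq n$, and $d\geq n$, hence $ad+bc\geq mn$; (iii) one ruling is contained in each form, say $\ell_h\mid f$ and $\ell_v\mid g$, giving $b\geq n$ and $c\geq m$, hence $bc\geq mn$. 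In every case $ad+bc\geq mn$, which is the desired contradiction with $|X|=ad+bc<mn$.
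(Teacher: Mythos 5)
Your proposal is correct, and it reaches the contradiction by a genuinely different mechanism than the paper. The shared ingredients are the B\'ezout identity $|X|=ad+bc$ and the observation that a form whose restriction to a ruling is nonzero but vanishes at $k$ points of that ruling must have the corresponding degree at least $k$ (this is the paper's Lemma~\ref{max-deg}). From there the routes diverge. The paper first proves Lemma~\ref{case-1}: the hypothesis $|X|<mn$ forces, after relabeling, $a\ge m$ and $b\ge n$, and then forces $V(g)$ to contain \emph{both} rulings through the distinguished point $p$; the contradiction is then extracted globally, by factoring $g=\lambda h_0g_0$ into its $s+t$ ruling components plus a residual $(p,q)$-form and counting points, where the fact that $p$ is double-counted among the components of $h_0$ yields $as+bt\le ms+nt-1$ against $a\ge m$, $b\ge n$, $s,t\ge 1$. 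You instead enumerate all divisibility patterns of $\ell_h,\ell_v$ among $f,g$, dispose of every pattern except one by the degree-restriction bounds (each surviving pattern directly gives $ad+bc\ge mn$), and dispose of the remaining pattern --- both rulings dividing the same form --- by a local argument: $f\in\mathfrak{m}_p^2$ and $g\in\mathfrak{m}_p$ give intersection multiplicity at least $2$ at $p$, contradicting reducedness of $X$. Both proofs ultimately exploit the same geometric feature (the point $p$ at the crossing of the two extremal rulings), but yours converts it into local non-reducedness while the paper converts it into a $-1$ in a global count. Your version is self-contained and avoids the intermediate lemmas, at the cost of a (standard but unquoted) fact about local intersection multiplicities; the paper's version is longer but its by-products --- Lemma~\ref{case-1} and the $s,t,h_0,g_0$ bookkeeping --- are reused in Lemma~\ref{vcideg}, Theorem~\ref{gcd-thm}, and Proposition~\ref{num-theory-prop}, so it earns its keep downstream.
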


Before proving Theorem~\ref{cross}, we introduce the generalized B\'ezout's theorem and two technical lemmas to serve as tools for providing bounds on multidegrees. 

\begin{thm}[Generalized B\'ezout's Theorem, {\cite[\textsection 4.2.1]{shafarevich_basic_2013}}]\label{bezout} 
Let $f,g\in S$ be two bihomogeneous forms in $\PP^1\times \PP^1$. If $f$ and $g$ of multidegree $(a,b)$ and $(c,d)$ respectively, are in general position, i.e., $f, g$ have no common factor, then $|V(f)\cap V(g)|=ad+bc$, counting multiplicities.
\end{thm}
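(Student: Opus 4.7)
The plan is to resolve the structure sheaf of the intersection by the Koszul complex on $(f,g)$ and compare Euler characteristics on $\PP^1 \times \PP^1$. Since $S = \kk[x_0,x_1,y_0,y_1]$ is a UFD and $f,g$ share no common irreducible factor, the pair $(f,g)$ is a regular sequence in $S$: $f$ is a nonzerodivisor because $S$ is a domain, and $g$ acts as a nonzerodivisor on $S/(f)$ because whenever $gh \in (f)$ coprimality forces $f \mid h$. The Koszul complex on $(f,g)$ is therefore exact, and sheafifying it in the appropriate bidegrees gives the exact sequence on $\PP^1 \times \PP^1$
\[
0 \longrightarrow \cO(-a-c,-b-d) \longrightarrow \cO(-a,-b) \oplus \cO(-c,-d) \longrightarrow \cO \longrightarrow \cO_{V(f,g)} \longrightarrow 0.
\]

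Next I would observe that $V(f) \cap V(g)$ is zero-dimensional: any positive-dimensional component would sit inside a curve common to $V(f)$ and $V(g)$, forcing a shared irreducible factor and contradicting coprimality. Consequently $\cO_{V(f,g)}$ is a torsion coherent sheaf supported at finitely many points, its higher cohomology vanishes, and
\[
\chi\bigl(\cO_{V(f,g)}\bigr) \;=\; \sum_{p \in V(f,g)} \operatorname{length}_p \cO_{V(f,g),p},
\]
which is exactly the total intersection number the theorem asserts equals $ad+bc$. The identification of the stalk length with the classical intersection multiplicity at each point follows from restricting the Koszul resolution to the local ring at $p$, which is the standard definition of intersection multiplicity on a smooth surface.

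It then remains to compute the Euler characteristic as an alternating sum along the resolution. By the Künneth formula (or by direct computation on each $\PP^1$ factor), $\chi(\cO_{\PP^1 \times \PP^1}(p,q)) = \chi(\cO_{\PP^1}(p))\cdot\chi(\cO_{\PP^1}(q)) = (p+1)(q+1)$ for all $p,q \in \ZZ$, so
\[
\chi(\cO_{V(f,g)}) \;=\; 1 - (1-a)(1-b) - (1-c)(1-d) + (1-a-c)(1-b-d),
\]
which expands to $ad + bc$. The principal obstacle in the plan is establishing that $(f,g)$ is a regular sequence whose Koszul complex is exact; this rests on unique factorization in $S$ and on coprimality of $f$ and $g$. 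Everything after that step is formal bookkeeping via Euler characteristics, and the Künneth computation on $\PP^1 \times \PP^1$ is elementary because each line bundle $\cO(p,q)$ is an external tensor product of line bundles on the two $\PP^1$ factors.
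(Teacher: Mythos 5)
The paper does not prove this statement at all --- it is quoted as a known result with a citation to Shafarevich, and is then used as a black box throughout. So there is no in-paper argument to compare against; your proposal supplies a proof where the authors supply only a reference. That said, your argument is correct and complete. The chain of reasoning holds up at every step: coprimality in the UFD $S$ gives that $(f,g)$ is a regular sequence, hence the Koszul complex is exact and sheafifies to a resolution of $\cO_{V(f,g)}$; the absence of a common factor rules out a positive-dimensional component of the intersection (every curve in $\PP^1\times\PP^1$ is a divisor cut out by a single bihomogeneous form, so a common curve would force a common factor); a coherent sheaf with zero-dimensional support has vanishing higher cohomology, so its Euler characteristic is the total length, which is exactly ``$|V(f)\cap V(g)|$ counting multiplicities'' in the sense the paper uses; and the additivity of $\chi$ along the resolution together with $\chi(\cO(p,q))=(p+1)(q+1)$ gives $1-(1-a)(1-b)-(1-c)(1-d)+(1-a-c)(1-b-d)=ad+bc$, which I have checked expands correctly. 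The one point worth flagging is minor: identifying the local length $\operatorname{length}_p\cO_{V(f,g),p}$ with ``the'' intersection multiplicity needs no Tor correction here precisely because the two curves form a local complete intersection on a smooth surface, which your regular-sequence observation already establishes, so your appeal to the ``standard definition'' is justified rather than circular.
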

This theorem will be used extensively to help combinatorially determine virtual complete intersections.

\begin{lem}\label{max-deg}
Given a configuration of finitely many points $X$ in $\PP^1 \times \PP^1$, let $m$ be the maximum number of points on a single horizontal ruling and $n$ be the maximum number of points on a single vertical ruling. If $K(f,g)$ is a virtual resolution of $S/I_X$, where polynomials $f$ and $g$ are of degrees $(a,b)$ and $(c,d)$, respectively, then $\max(a, c) \ge m$ and $\max(b,d) \ge n$.
\end{lem}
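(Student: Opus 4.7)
The plan is to argue by contradiction. Assuming $\max(a,c) < m$, I will construct a common divisor of $f$ and $g$, which will contradict the fact that their Koszul complex is a virtual resolution of the zero-dimensional scheme $X$. The statement $\max(b,d) \geq n$ then follows by the entirely symmetric argument interchanging the roles of the two copies of $\PP^1$, so it suffices to handle one case.

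The first step is to extract two ingredients from the hypothesis that $K(f,g)$ is a virtual resolution of $S/I_X$. First, the sheaf surjection $\mathcal{I}_X \leftarrow \cO(-a,-b) \oplus \cO(-c,-d)$ gives $(f,g) \subseteq I_X$, so both $f$ and $g$ vanish at every point of $X$. Second, $f$ and $g$ share no nontrivial common factor in $S$: any bihomogeneous $h \mid \gcd(f,g)$ of positive degree would make $V(h)$ a positive-dimensional subscheme contained in $V(f,g) = X$, contradicting $\dim X = 0$. Now let $\ell$ be the $(0,1)$-form cutting out a horizontal ruling $L$ that contains exactly $m$ points of $X$. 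Restricting $f$ to $L \cong \PP^1$ yields a form in $\kk[x_0,x_1]$ of degree $a$. Since $f$ vanishes on the $m$ distinct points of $X \cap L$, and $m > a$, this restriction must be identically zero on $L$, which means $\ell \mid f$. The identical argument applied to $g$ (using $c < m$) gives $\ell \mid g$, producing the forbidden common factor. The symmetric argument for vertical rulings, using a $(1,0)$-form $\ell'$ and the fact that $f$ and $g$ restrict to forms of degrees $b$ and $d$ in $y_0, y_1$, finishes the proof.

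The step I expect to be the most delicate is establishing $\gcd(f,g) = 1$ from the definition of virtual resolution; it is intuitively clear from the scheme-theoretic picture, but it relies on unpacking that $\widetilde{S/(f,g)} \cong \widetilde{S/I_X}$ forces equality of the corresponding subschemes in $\PP^1 \times \PP^1$, together with the fact that $V(B) = \emptyset$ so that $B$-saturation cannot eliminate a one-dimensional component $V(h)$. Everything else in the argument is then a direct application of the single-variable fact that a nonzero degree-$k$ form on $\PP^1$ has at most $k$ roots.
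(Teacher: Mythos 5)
Your proof is correct and takes essentially the same approach as the paper's: restrict $f$ and $g$ to a ruling containing $m$ points of $X$, use the root count to conclude both vanish on the entire ruling, and derive a contradiction from the zero-dimensionality of $V(f,g)$. The paper phrases the final contradiction as $V(f)\cap V(g)$ containing the whole ruling (hence not equal to the finite set $X$) rather than as a common factor $\ell$ of $f$ and $g$, but these are the same observation.
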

\begin{proof}
Assume, for the sake of contradiction, that both $a$ and $c$ are less than $m$. Without loss of generality, we can change coordinates to assume that the $m$ points are on the horizontal ruling with coordinates $[1:0]$ and assume none of the $m$ points lie on the vertical ruling $[0:1]$. We can restrict $f$ to the horizontal ruling $[1:0]$ by substituting $y_0=1, y_1=0$, $x_0 = 1$ yielding a single variable polynomial of degree $a$ with $m$ roots. By the assumption that $a<m$, this restriction of $m$ must be identically 0, and so $V(f)$ contains the entire ruling $[1:0]$. By an identical argument on $g$ using $c<m$, we have $V(g)$ also containing the entire ruling $[1:0]$. Therefore, $V(f)\cap V(g)$ contains that entire ruling, and so cannot be the original finite set of points. Thus our assumption that both $a$ and $c$ are less than $m$ was false, and so $\max(a, c)\geq m$. The proof that $\max(b, d)\geq n$ is analogous.  
\end{proof}

\begin{lem}\label{case-1}
Let $X$ be a set of points in $\PP^1\times \PP^1$. Let $m$ and $n$ be as in Lemma~\ref{max-deg}. If $K(f,g)$ is a virtual resolution for $S/I_X$, where $f$ and $g$ have multidegrees $(a,b)$ and $(c,d)$ respectively, and $|X|<mn$, then,
\begin{enumerate}
    \item\label{case-1-1} Either (i) $a\ge m$ and $b\ge n$, or (ii) $c\ge m$ and $d\ge n$. 
    \item In case $(i)$, $V(g)$ contains the horizontal ruling containing the $m$ points, and the vertical ruling on the containing the $n$ points. In case (ii), the same is true of $f$.
\end{enumerate}
\end{lem}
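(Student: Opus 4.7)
The plan is to combine Lemma~\ref{max-deg} with the generalized B\'ezout theorem and a restriction argument. The first step is to justify that B\'ezout applies: since $K(f,g)$ is a virtual resolution of $S/I_X$, the sheaves $\widetilde{S/\langle f,g\rangle}$ and $\widetilde{S/I_X}$ are isomorphic on $\PP^1\times\PP^1$, so $V(f)\cap V(g)=X$ as subschemes. Because $X$ is zero-dimensional and reduced, $f$ and $g$ share no common factor (otherwise $V(f)\cap V(g)$ would contain a positive-dimensional component), and every intersection is transverse. Theorem~\ref{bezout} then yields the key identity $|X|=ad+bc$.

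For part (1), Lemma~\ref{max-deg} gives $\max(a,c)\geq m$ and $\max(b,d)\geq n$, leaving four combinations. I would rule out the two mixed cases using $|X|<mn$: if $a\geq m$ and $d\geq n$ (with $b<n$), then $ad+bc\geq ad\geq mn>|X|$, contradicting $|X|=ad+bc$; symmetrically, if $c\geq m$ and $b\geq n$ (with $a<m$), then $bc\geq mn$, again a contradiction. The surviving two combinations are exactly (i) and (ii).

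For part (2), I would work in case (i), with case (ii) following by swapping $f$ and $g$. The inequalities $a\geq m$ and $b\geq n$, combined with $|X|=ad+bc<mn$, give $md+nc\leq ad+bc<mn$, equivalently $\tfrac{c}{m}+\tfrac{d}{n}<1$. Since degrees are nonnegative, this forces $c<m$ and $d<n$. The restriction trick from the proof of Lemma~\ref{max-deg} then finishes the argument: restricting $g$ to the horizontal ruling $L_h$ containing $m$ points of $X$ produces a binary form in $x_0,x_1$ of degree $c<m$ that must vanish at $m$ distinct points, and so vanishes identically, giving $V(g)\supseteq L_h$. The analogous argument on the vertical ruling with $n$ points uses $d<n$.

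The main subtlety I anticipate is the opening step of cleanly extracting $|X|=ad+bc$ from the virtual resolution hypothesis together with the reducedness of $X$; once that identity is in hand, the remainder is integer inequality bookkeeping plus the one-variable restriction technique already used in the paper.
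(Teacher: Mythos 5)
Your proposal is correct and follows essentially the same route as the paper: part (1) is Lemma~\ref{max-deg} plus B\'ezout ruling out the mixed degree cases via $ad+bc=|X|<mn$, and part (2) is the same restriction-to-a-ruling argument (the paper runs it as a contradiction assuming $V(g)$ misses the ruling, while you first extract $c<m$ and $d<n$ from $md+nc\le ad+bc<mn$, but the inequality and the one-variable vanishing argument are identical). Your explicit justification that $f$ and $g$ share no common factor, so that B\'ezout applies and $|X|=ad+bc$ for reduced $X$, is a point the paper leaves implicit.
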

\begin{proof}
By Lemma \ref{max-deg}, we have
\[\max(a, c)\geq m \; \text{ and } \; \max(b, d)\geq n.\]
Without loss of generality, suppose $a\ge c$ and $d\ge b$. Then, $a\ge m$ and  $d\ge n$. However, in this case $ad \ge mn$, so $ad+bc\ge mn$, which contradicts $|X|<mn$. Therefore, we must have $a\ge c$ and $b\ge d$, so $a\ge m$ and $b\ge n$. This proves (\ref{case-1-1}).
\par If $g$ does not contain the entire line of the $m$ collinear points, then $g$ restricted to that line is a nonzero polynomial with $m$ roots, and so has degree at least $m$. This means that $c\ge m$, which gives the contradiction $|X|=ad+bc\ge bc\ge mn$. Similarly, if $g$ does not contain the ruling with $n$ points, then its restriction to that line must have degree at least $n$ giving the contradiction $|X|=ad+bc\ge ad\ge mn$. This completes the proof.  
\end{proof}

\begin{proof}[Proof of Theorem~\ref{cross}]
Assume that $|X|<mn$, and there is at least one point in $X$ that lies both on a horizontal ruling with $m$ points and a vertical ruling with $n$ points. If $K(f,g)$ is a virtual resolution for $S/I_X$, where $f$ has multidegree $(a,b)$ and $g$ has multidegree $(c,d)$, then by the first part of Lemma~\ref{case-1}, we may assume $a\ge m$ and $b\ge n$. Suppose $V(g)$ includes $s$ horizontal lines and $t$ vertical lines. Now using the second part of Lemma~\ref{max-deg}, $s$ and $t$ are at least one, and by assumption, the intersection of these $s+t$ lines contains at least one point of $X$. Factoring $g$, and changing coordinates if necessary so that no points have coordinate $[0:1]$, yields 
$$g = \lambda h_0 g_0, \text{where } h_0=(x_1-\alpha_1 x_0)(x_1-\alpha_2 x_0)\cdots (x_1-\alpha_s x_0)(y_1-\beta_1 y_0)\cdots (y_1-\beta_t y_0)$$
is the product of the $s+t$ components, and $g_0$ is a bihomogeneous polynomial of multidegree $(p,q)=(c-s,d-t)$. Let $Y=V(h_0, f)\subseteq X$ be the points covered by the $s+t$ components of $g$. We have $|Y|\le ms+nt-1$, because we are certainly double counting the point lying on the intersection of the vertical and horizontal rulings. The remaining set of points, $X\setminus Y$, must be precisely $V(g_0, f)$, whose cardinality is $aq+bp$ according to Theorem~\ref{bezout}. 

Applying Theorem~\ref{bezout} again to $f$ and $g$, it follows that 
$$a(s+q)+b(t+p) = |X| \le ms+nt-1+aq+bp.$$
Simplifying the inequality above yields
$$as+bt\le ms+nt-1.$$
Since $a\ge m$, $b\ge n$, and both $s$ and $t$ are at least one, we have a contradiction. Thus, $X$ cannot be a VCI.
\end{proof}

Using Lemma~\ref{max-deg}, we can now flesh out Remark~\ref{hyperbola} into a theorem.

\begin{thm}\label{badconfiguration}
    There exist sets of points $X_1,X_2\subset\PP^1\times \PP^1$ such that $X_1$ and $X_2$ are equivalent up to configuration, but $X_1$ is a VCI and $X_2$ is not.
\end{thm}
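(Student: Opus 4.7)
The plan is to use the six-point configuration of Figure~\ref{fig:badconfig} under two different coordinatizations. Label the points so that $p_1,p_2$ lie on the column with two points, $p_3$ is the second point on the row containing $p_2$, and $p_4,p_5,p_6$ are the three isolated ``diagonal'' points. For $X_1$, I would pick coordinates so that the four rightmost points $\{p_3,p_4,p_5,p_6\}$ lie on a common $(1,1)$-curve $V(q)$; for $X_2$, I would pick coordinates generically so that no four of the six points lie on any $(1,1)$-curve. The latter is a nonempty Zariski-open condition, since the space of $(1,1)$-forms is only projectively three-dimensional, so four general points impose four independent conditions.

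To verify $X_1$ is a VCI, let $\ell$ be the $(1,0)$-form cutting out the column through $p_1,p_2$ and set $f=\ell\cdot q$, a $(2,1)$-form vanishing at all six points. The space of $(2,2)$-forms vanishing at the six points has projective dimension two, and a B\'ezout count on the curve $V(q)$ shows that the subfamily of $(2,2)$-forms sharing an irreducible factor with $f$ is strictly smaller (any $(2,2)$-form divisible by $\ell$ through the six points meets $V(q)$ in at least four points, forcing it to be divisible by $q$ and vice versa). Hence I may choose a $(2,2)$-form $g$ vanishing at all six points with no common factor with $f$. Theorem~\ref{bezout} then gives $|V(f)\cap V(g)|=2\cdot 2+1\cdot 2=6$ counted with multiplicity, and since $X_1\subseteq V(f)\cap V(g)$ already consists of six reduced points, the intersection equals $X_1$ scheme-theoretically. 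Hence $K(f,g)$ is a virtual resolution of $S/I_{X_1}$.

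To show $X_2$ is not a VCI, suppose toward contradiction that $K(f,g)$ is a virtual resolution with $\deg f=(a,b)$ and $\deg g=(c,d)$; by B\'ezout, $ad+bc=6$. The argument proceeds by case analysis on whether each of $f,g$ is divisible by $\ell_{\mathrm{row}}$ (the $(0,1)$-form cutting the row $\{p_2,p_3\}$) or by $\ell_{\mathrm{col}}$ (the $(1,0)$-form cutting the column $\{p_1,p_2\}$). The key observations are: (a) if neither $f$ nor $g$ is divisible by either line form, then the restrictions of $f,g$ to each of these lines vanish at two points, forcing $a,b,c,d\ge 2$ and $ad+bc\ge 8$, a contradiction; (b) if exactly one of $f,g$ is divisible by $\ell_{\mathrm{col}}$ (say $f$), then $V(f)\cap V(g)\cap(\text{column})$ equals the zero scheme of $g|_{\mathrm{col}}$, and reducedness of $X_2$ at $\{p_1,p_2\}$ forces $d=2$; symmetrically $c=2$ if $f$ is divisible by $\ell_{\mathrm{row}}$ but $g$ is not. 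Running through the handful of surviving bidegree tuples $(a,b,c,d)$, each either fails B\'ezout outright or forces $f$ or $g$ to factor as $(\text{line})\cdot q'$ with $q'$ a $(1,1)$-form through either $\{p_3,p_4,p_5,p_6\}$ or $\{p_1,p_4,p_5,p_6\}$. This is precisely ruled out by the generic coordinate choice for $X_2$.

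The main obstacle is the bookkeeping in the case analysis: there are several subcases once one accounts for which line forms divide each of $f,g$, and in each subcase B\'ezout must be balanced against the requirement that $V(f)$ and $V(g)$ each contain all six points. Organizing cases by divisibility pattern rather than by bidegree directly keeps the analysis finite and makes clear why the four-point $(1,1)$-curve condition emerges as the crucial combinatorial obstruction in every non-vacuous case.
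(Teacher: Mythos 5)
Your proposal is correct and follows essentially the same route as the paper: the same six-point configuration, the same use of B\'ezout and restriction to rulings to force one of the two forms to split off a line and leave a residual $(1,1)$-form through four of the points, which is exactly the coordinate-dependent condition separating the VCI instance from the non-VCI one. The only substantive differences are organizational: the paper pins the bidegrees to $(2,1)$ and $(2,2)$ via Lemma~\ref{max-deg} before factoring rather than casing on divisibility patterns, and it certifies the VCI instance with an explicit pair $(f,g)$ rather than your dimension count (which does work, provided you note that $q$ misses $p_1$ and $p_2$, so that the $(2,2)$-forms through the six points sharing a factor with $f=\ell q$ are precisely the multiples of $f$, a projectively one-dimensional family inside an at-least-two-dimensional one).
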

\begin{proof}
    Consider the configuration of six points in Figure~\ref{fig:badconfig}, and suppose it is the VCI of $f$ and $g$ with multidegree $(a,b)$ and $(c,d)$ respectively. Through the six points of Figure~\ref{fig:badconfig} there is a form of degree $(0,5)$ and a form of degree $(5,0)$. If any of the degrees were 0, say $a$, then $V(f)$ would be parallel lines, and since there are five distinct coordinates, $f$ would have degree $(0,5)$. There is no choice of $c$ and $d$ that satisfies $ad+bc=6$, so none of the degrees are 0.
    
    By Lemma~\ref{max-deg}, since $m=2$ and $n=2$, we can assume that $a\geq 2$ and $b$ or $d$ is at least $2$. But we also have $ad+bc=6$ by Theorem~\ref{bezout}, and since none of the degrees are $0$, we find that the only possibility is that the degrees of $f$ and $g$ are $(2,1)$ and $(2,2)$ (or vice versa). 
    
    Without loss of generality, let $f$ have degree $(2,1)$. By Theorem~\ref{bezout}, $f$ will intersect each vertical ruling in exactly one point unless it contains the whole ruling. Since there are two points in our configuration sharing a vertical ruling (and symmetrically a horizontal ruling), $f$ must have a degree $(1,0)$ component passing through that ruling, and therefore must have a degree $(1,1)$ component passing through the remaining four points. Conics in the projective plane (i.e. forms of degree $(1,1)$) are determined by three points, though, so this is impossible in most cases. Thus, in these cases the set of points could not be a VCI.
    
    However, in the cases where the remaining four points do lie on a conic, the points may be a VCI. For instance, letting $\kk = \CC$ for a moment, if the points have coordinates:
    \[X = \left\{ \begin{matrix}([1:1],[1:1]),([2:1],[1:2]),([3:1],[1:3]), \\
    ([4:1],[1:4]), ([1:0],[1:1]),([1:0],[1:0])\end{matrix} \right\},\]
    then $K(f,g)$ is a virtual resolution of $S/I_X$, where
    $$ f = x_0x_1y_0 - x_1^2y_1
    \;\; \text{ and } \;\; g = 24x_1^2y_0^2 - 
    x_0^2y_0y_1 - 
    50x_1^2y_0y_1+
    x_0^2y_1^2 - 
    9x_0x_1y_1^2+
    35x_1^2y_1^2.$$
\end{proof}

Theorem~\ref{cross} enables us to give a complete classification of VCI points in $\mathbb{P}^1\times\mathbb{P}^1$ that lie in a configuration that forms a Ferrers diagram. A Ferrers diagram of points in an $m$ by $n$ grid will be called a \textbf{\emph{rectangle}}. As mentioned in Section~\ref{sec:intro}, \cite{guardo_arithmetically_2015} show a set of points is arithmetically Cohen--Macaulay exactly when it forms a Ferrers diagram. The corollary below states that if $X$ is arithemetically Cohen--Macaulay, then it is a VCI if and only if it is a complete intersection.

\begin{cor}\label{ferrers}
If $X$ is a set of points in $\PP^1\times \PP^1$ forming a Ferrers diagram, then $X$ is a virtual complete intersection if and only if it is a rectangle.
\end{cor}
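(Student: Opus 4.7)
The plan is to prove both directions by appealing directly to Theorem~\ref{cross} and to an explicit construction in the rectangle case.

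For the easy direction, suppose $X$ is an $m\times n$ rectangle. Then after a change of coordinates, $X$ lies on $m$ vertical rulings with first coordinates $\alpha_1,\ldots,\alpha_m$ and $n$ horizontal rulings with second coordinates $\beta_1,\ldots,\beta_n$, and $X$ consists of every intersection point. Setting $f=\prod_{i=1}^m(x_1-\alpha_ix_0)$ and $g=\prod_{j=1}^n(y_1-\beta_jy_0)$, the forms $f$ and $g$ form a regular sequence whose common zero locus is exactly $X$, so $\langle f,g\rangle=I_X$ (it is already saturated since each factor is a prime of the expected type), and $K(f,g)$ is a minimal free resolution of $S/I_X$. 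In particular it is a virtual resolution, so $X$ is a VCI (indeed a complete intersection).

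For the forward direction, assume $X$ is a Ferrers diagram and a VCI; I want to show $X$ is a rectangle. Arrange the configuration so that row lengths weakly decrease from top to bottom and column lengths weakly decrease from left to right, as in the setup preceding Figure~\ref{config}. Then the top row realizes the maximum row length $m$ and the leftmost column realizes the maximum column length $n$. By the defining property of a Ferrers diagram, the top-left cell is occupied, so there is a point $p\in X$ lying simultaneously on a horizontal ruling containing $m$ points of $X$ and on a vertical ruling containing $n$ points of $X$. If $X$ fails to be a rectangle, then not every cell of the $m\times n$ bounding box is filled, so $|X|<mn$. Theorem~\ref{cross} then forbids $X$ from being a VCI, contradicting our assumption. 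Hence $X$ must be a rectangle.

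There is essentially no obstacle here beyond matching definitions: the only subtle point is confirming that in any Ferrers configuration the top-left point is always present and lies on both a maximum-length row and a maximum-length column, which is immediate from the weakly-decreasing convention. Everything else follows by citing Theorem~\ref{cross} in the forward direction and writing down the Koszul complex in the reverse direction.
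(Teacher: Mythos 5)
Your proposal is correct and follows essentially the same route as the paper: the reverse direction uses the same $(m,0)$- and $(0,n)$-forms whose product of linear factors cuts out the rectangle as a genuine complete intersection, and the forward direction applies Theorem~\ref{cross} to the corner point of a non-rectangular Ferrers diagram, which lies on a row of $m$ points and a column of $n$ points while $|X|<mn$. No gaps.
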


\begin{proof}
  Defining $m$ and $n$ as before, if $X$ forms a Ferrers diagram that is not a rectangle, then the number of points is strictly lower than $mn$. Further, the corner of the diagram is one of $m$ points on its horizontal ruling and $n$ points on its vertical ruling, so applying Theorem~\ref{cross} proves that $X$ is not a VCI.
\par The converse was proved in \cite{guardo_arithmetically_2015}*{Theorem 5.13}, but we reproduce it here for completeness. If the configuration is a rectangle, then let $f$ denote the $(m, 0)$-form whose vanishing set consists of the $m$ vertical rulings the points lie on, and let $g$ denote the $(0,n)$-form whose vanishing set consists of the $n$ horizontal rulings the points lie on. Then $(f, g)$ is a regular sequence, indicating $X$ is a complete intersection.
\end{proof}

\section{Bounds on Multidegrees and Size of A Configuration}\label{sec-nt-bounds}
In some cases, the property of being a VCI cannot be directly determined based on the maximum number of points on a single vertical/horizontal ruling. In this section, we provide characterizations for VCIs by more closely examining the relationship between the multidegrees of $f$ and $g$ and the total number of points in the configuration.

\begin{thm}\label{gcd-thm}
If $|X|<mn$ and $\gcd(m,n)$ does not divide $|X|$, then $X$ is not a VCI.
\end{thm}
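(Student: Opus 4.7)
The plan is to assume $X$ is a VCI via a Koszul complex $K(f,g)$ with $\deg f = (a,b)$ and $\deg g = (c,d)$, use Lemma~\ref{case-1} to pin down the bidegrees, and then conclude from the Bézout identity $|X| = ad + bc$ that $\gcd(m,n)$ must divide $|X|$, contradicting the hypothesis.

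First, I would observe that the virtual resolution condition forces $(f,g)^{\mathrm{sat}} = I_X$, so $V(f,g) = X$ as subschemes of $\PP^1 \times \PP^1$. Since $X$ is zero-dimensional, $f$ and $g$ must share no common factor (any nontrivial common factor would contribute a positive-dimensional component to $V(f,g)$, which is impossible). Theorem~\ref{bezout} then gives the numerical equality $|X| = ad + bc$, using reducedness of $X$ to identify scheme-theoretic length with the point count.

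Next, since $|X| < mn$, Lemma~\ref{case-1} applies. After swapping $f$ and $g$ if necessary I may take case $(i)$: $a \geq m$, $b \geq n$, and $V(g)$ contains both the vertical ruling $V(\ell)$ through the $n$ collinear points and the horizontal ruling $V(\lambda)$ through the $m$ collinear points, where $\ell$ has bidegree $(1,0)$ and $\lambda$ has bidegree $(0,1)$. The heart of the argument is the sharpening $a = m$ and $b = n$. Since $\ell \mid g$ but $\ell \nmid f$ (otherwise $V(\ell) \subseteq V(f,g) = X$ would be $1$-dimensional), Bézout applied to the coprime pair $(f, \ell)$ gives that $V(f) \cap V(\ell)$ has scheme-theoretic length $b$. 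This subscheme is contained in $V(f,g) \cap V(\ell) = X \cap V(\ell)$, a reduced zero-dimensional scheme of length $n$, forcing $b \leq n$; combined with Lemma~\ref{case-1} this yields $b = n$. The symmetric argument using $\lambda$ gives $a = m$.

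Finally, $|X| = ad + bc = md + nc$ is manifestly divisible by $\gcd(m,n)$, contradicting the hypothesis; hence $X$ is not a VCI. The main obstacle I expect is handling the scheme-theoretic identities cleanly, especially the passage $V(f) \cap V(\ell) \subseteq X \cap V(\ell)$ that promotes Lemma~\ref{case-1}'s inequalities $a \geq m$, $b \geq n$ to exact equalities; once those equalities are in hand the divisibility conclusion is immediate.
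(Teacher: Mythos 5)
Your proposal is correct and follows the same overall strategy as the paper: use Lemma~\ref{case-1} to place yourself in the case $a\geq m$, $b\geq n$, sharpen these to equalities, and then read off divisibility of $|X|=md+nc$ by $\gcd(m,n)$ from B\'ezout. The one place you diverge is in how you obtain $a=m$ and $b=n$. The paper isolates this as Lemma~\ref{vcideg} and proves it by a global count: letting $Y$ be the set of points of $X$ covered by the $s$ horizontal and $t$ vertical line components of $V(g)$ (which exist by Lemma~\ref{case-1}), B\'ezout gives $|Y|=as+bt$, while the ruling bounds give $|Y|\leq ms+nt$, and combined with $a\geq m$, $b\geq n$ this forces equality. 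You instead apply B\'ezout to the coprime pair $(f,\ell)$ for a single line component $\ell$ of $g$, and bound the length-$b$ scheme $V(f)\cap V(\ell)$ by the $n$ reduced points of $X\cap V(\ell)$; this is a more local argument and is also valid, provided (as you note) one justifies the scheme-theoretic containment $V(f,\ell)\subseteq X\cap V(\ell)$, which follows from $\langle f,g\rangle\subseteq\langle f,\ell\rangle$ and saturation. Neither route is stronger for this theorem, but the paper's Lemma~\ref{vcideg} additionally identifies exactly which rulings appear as components of $V(g)$, which is reused later in Propositions~\ref{numthry} and~\ref{num-theory-prop}; your local argument would not recover that extra information.
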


Before proving Thereom~\ref{gcd-thm}, we first prove the following lemma.

\begin{lem}\label{vcideg}
Let $f$ be a bihomogeneous polynomial of multidegree $(a,b)$ and $g$ of multidegree $(c,d)$. Let $K(f,g)$ be a virtual resolution for $S/I_X$ with $|X|<mn$. By Lemma \ref{case-1}, without loss of generality, we can assume that $a\ge m$ and $b\ge n$. Then,
\begin{enumerate}
    \item $a=m$, $b=n$, and
    \item $V(g)$ has vertical components exactly on rulings with $n$ points of $X$ and has horizontal components exactly on rulings with $m$ points of $X$. 
\end{enumerate}      
\end{lem}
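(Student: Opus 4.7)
The plan is to combine Lemma~\ref{case-1} with B\'ezout's theorem (Theorem~\ref{bezout}), applied to a factorization of $g$ into its linear ruling components and a residual factor. First I would observe that the argument used to prove Lemma~\ref{case-1}(2) actually establishes a stronger statement: \emph{every} vertical ruling of $\PP^1\times\PP^1$ containing $n$ points of $X$ and \emph{every} horizontal ruling containing $m$ points of $X$ must be a component of $V(g)$. Let $s$ denote the total number of $(1,0)$-form factors of $g$ and $t$ the number of $(0,1)$-form factors, so $s,t\ge 1$ since by definition there is at least one ruling of each type attaining the maximum. Then factor $g=h_0 g_0$, where $h_0$ is the product of all $s+t$ such linear factors, so that $h_0$ has multidegree $(s,t)$ and $g_0$ has multidegree $(c-s,d-t)$ with no further ruling factors.

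Next I would apply B\'ezout's theorem to the pairs $(f,g)$, $(f,h_0)$, and $(f,g_0)$. Since $K(f,g)$ is a Koszul complex, $f$ and $g$ form a regular sequence, so $f$ is coprime to each of $g$, $h_0$, and $g_0$, and Theorem~\ref{bezout} gives intersection counts, with multiplicity, of $ad+bc$, $at+bs$, and $a(d-t)+b(c-s)$, respectively. Because $X$ is reduced and $V(f)\cap V(g)=X$ scheme-theoretically, the intersection multiplicity of $f$ and $g$ at every point of $X$ is one, so the set-theoretic cardinalities of $Y_1:=V(f)\cap V(h_0)$ and $Y_2:=V(f)\cap V(g_0)$ satisfy $|Y_1|\le at+bs$ and $|Y_2|\le a(d-t)+b(c-s)$. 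Since $Y_1\cup Y_2=V(f)\cap V(g)=X$, we obtain $|X|\le|Y_1|+|Y_2|\le ad+bc=|X|$, forcing both upper bounds to be equalities; in particular $|Y_1|=at+bs$.

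The main step is then a direct geometric upper bound on $|Y_1|$: each of the $s$ vertical components of $V(g)$ contains at most $n$ points of $X$ and each of the $t$ horizontal components contains at most $m$, so $|Y_1|\le sn+tm$. Combining this with $|Y_1|=at+bs$ and the hypotheses $a\ge m$, $b\ge n$, $s,t\ge 1$ yields $t(a-m)+s(b-n)\le 0$; since each summand is nonnegative, both vanish, and since $s,t\ge 1$ we conclude $a=m$ and $b=n$, which is part~(1). Equality in $|Y_1|=sn+tm$ additionally forces every vertical component of $V(g)$ to contain exactly $n$ points of $X$ and every horizontal component to contain exactly $m$ points of $X$; combined with the containment already provided by Lemma~\ref{case-1}, this gives part~(2). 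The main subtlety I anticipate is carefully justifying the passage between the multiplicity counts produced by B\'ezout's theorem and the set-theoretic cardinalities needed for the union bound, which should follow from the reducedness of $X$ together with the virtual resolution hypothesis that $(f,g)$ saturates to $I_X$.
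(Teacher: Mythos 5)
Your proof is correct and follows essentially the same route as the paper: factor $g$ into its linear ruling components $h_0$ times a residual $g_0$, apply Theorem~\ref{bezout} to compare the B\'ezout count $at+bs$ with the geometric bound $sn+tm$, and use $a\ge m$, $b\ge n$, $s,t\ge 1$ to force equality, hence $a=m$, $b=n$ and the fullness of each ruling component. Your additional care in passing from multiplicity counts to set-theoretic cardinalities via the union bound $|Y_1|+|Y_2|\le ad+bc$, and in checking both inclusions needed for the word ``exactly'' in part~(2), makes explicit some steps the paper leaves implicit, but it is the same argument.
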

\begin{proof}
As in the proof of Theorem~\ref{cross}, let $s$ be the number of factors of $g$ of the form $x_1-\alpha x_0$ (i.e. the number of horizontal lines in $V(g)$) and $t$ be the number of factors of the form $y_1-\beta y_0)$ (i.e. the number of vertical lines in $V(g)$), where $\alpha, \beta$ are constants. Set $Y$ to be the points of $g$ covered by those lines, and let $(p,q)$ be the multidegree of the remaining components of $g$. By Lemma~\ref{case-1}, we have $s,t\ge 1$ and
$$as+bt=|Y|\le ms+nt.$$
By hypothesis, $a\ge m$ and  $b\ge n$. Either of these being strict would contradict the above, so $a=m$ and  $b=n$. This implies 
$$ms+nt=|Y|.$$
Thus, each vertical component of $V(g)$ must contain $n$ points of $X$ and each horizontal component must contain $m$ points of $X$, because Theorem~\ref{cross} guarantees no point can lie on both a horizontal and vertical component. 
\end{proof}

Note that when $|X|<mn$, the values of $s$ and $t$ are determined by the configuration of $X$: in this case, we must have $s=m$ and $t=n$.

\begin{proof}[Proof of Theorem~\ref{gcd-thm}]
Suppose $|X| < mn$ with $\gcd(m,n)$ not dividing $|X|$. Assume $K(f,g)$ is a virtual resolution for $S/I_X$. From Lemma~\ref{vcideg}, it can be assumed $f$ has multidegree $(m,n)$. Letting $g$ have multidegree $(c,d)$, Theorem~\ref{bezout} implies $|X|=dm+cn$. This is divisible by $\gcd(m,n)$, which is a contradiction. 
\end{proof}

\begin{prop}\label{numthry}
If $K(f,g)$ is a virtual resolution for $S/I_X$ with $|X|<mn$, $\gcd(m,n)=1$, and $(m,n)$ the multidegree of $f$, then the multidegree of $g$ is $(c,d)$, where $0\leq c < m$ and $0\leq d< n$ are unique integers satisfying:
$$c = n^{-1}|X|\mod m, \hspace{1cm} \text{ and }\hspace{1cm} d = m^{-1}|X|\mod n.$$
\end{prop}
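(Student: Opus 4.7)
The plan is to combine Lemma~\ref{vcideg} with the generalized B\'ezout's theorem (Theorem~\ref{bezout}) and then invoke elementary modular arithmetic. By Lemma~\ref{vcideg}, since $K(f,g)$ is a virtual resolution with $|X| < mn$ and $f$ has multidegree $(m,n)$, the polynomials $f$ and $g$ must share no common factor (otherwise the virtual resolution would fail to have the correct sheafification). Hence Theorem~\ref{bezout} applies cleanly, and since $X$ is reduced with $V(f) \cap V(g) = X$, counting yields the key equation
\[
 md + nc = |X|.
\]

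First I would reduce this equation modulo $m$ to obtain $nc \equiv |X| \pmod{m}$. Because $\gcd(m,n) = 1$, the element $n$ is invertible mod $m$, giving $c \equiv n^{-1}|X| \pmod{m}$. Symmetrically, reducing modulo $n$ produces $d \equiv m^{-1}|X| \pmod{n}$. These are precisely the congruences asserted in the proposition.

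Next I would verify the range constraints. The inequalities $c \geq 0$ and $d \geq 0$ are automatic since $c$ and $d$ are components of a multidegree. For the upper bounds, suppose for contradiction that $c \geq m$; then from $md + nc = |X|$ and $d \geq 0$ we obtain $|X| \geq 0 + nm = mn$, contradicting the hypothesis $|X| < mn$. An identical argument rules out $d \geq n$. Uniqueness of $(c,d)$ in the specified range is then immediate, since each of the two congruences has a unique representative in $\{0, 1, \ldots, m-1\}$ and $\{0, 1, \ldots, n-1\}$ respectively.

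The proof is essentially a short computation once Lemma~\ref{vcideg} and Theorem~\ref{bezout} are in hand, so I do not anticipate a substantive obstacle. The only subtle point worth emphasizing is that the B\'ezout count $md + nc$ really does equal $|X|$ (as opposed to merely bounding it), and this is justified because Lemma~\ref{vcideg} pins down the linear factor structure of $g$ so that no intersection multiplicity is lost and no extraneous intersection points appear outside of $X$.
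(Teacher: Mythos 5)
Your proposal is correct and follows essentially the same route as the paper: apply B\'ezout's theorem to get $dm+cn=|X|$, reduce modulo $m$ and $n$ using $\gcd(m,n)=1$, and deduce $c<m$, $d<n$ from $|X|<mn$. The extra remarks about $f$ and $g$ having no common factor and the intersection being reduced are fine but are simply taken for granted in the paper's two-line argument.
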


\begin{proof}
By Theorem~\ref{bezout},
$$dm+cn=|X|.$$
Considering modulo $m$ and $n$, we have: 
$$c\equiv n^{-1}|X|\mod m \hspace{1cm} \text{and} \hspace{1cm} d\equiv m^{-1}|X|\mod n.$$
Since both $cn$ and $dm$ are less than $mn$ we must have $c<m$ and $d<n$. Thus $c$ and $d$ must have the desired values.
\end{proof}

\begin{prop}\label{num-theory-prop}
Assume $X$ is a finite set of points with $|X|<mn$, $\gcd(m,n)=1$, and let 
\[c = n^{-1}|X|\mod m \hspace{1cm} \text{and} \hspace{1cm} d = m^{-1}|X|\mod n.\]
Let $s$ and $t$ be defined as in the proof of Theorem~\ref{cross}, and set $p:=d-s$ and $q:=c-t$. If any of the following are true, $X$ will not be a VCI.
\begin{enumerate}
    \item $dm+cn \neq |X|$
    \item $d<s$ or $c<t$ 
    \item There is a horizontal ruling with strictly between $q$ and $m$ points of $X$, or a vertical ruling with strictly between $p$ and $n$ points of $X$.   
\end{enumerate}
\end{prop}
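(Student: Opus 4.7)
The plan is to assume in each case that $X$ is a VCI with virtual resolution $K(f,g)$ and derive a contradiction using the structural results already established. By Lemma~\ref{vcideg} and Proposition~\ref{numthry}, I may take $f$ of multidegree $(m,n)$ and $g$ of the uniquely determined multidegree $(c,d)$; moreover I would factor $g = \lambda h_0 g_0$, where $h_0$ is the product of the $s$ degree-$(0,1)$ factors cutting out the full horizontal rulings of $X$ together with the $t$ degree-$(1,0)$ factors cutting out the full vertical rulings, and $g_0$ has multidegree $(q,p) = (c-t,\,d-s)$ with no horizontal or vertical line components. This common setup makes (1) and (2) essentially immediate and reduces (3) to a counting problem on a fixed non-full ruling.

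Condition (1) is handled by Theorem~\ref{bezout}: since $X$ is reduced and equal set-theoretically to $V(f,g)$, Bezout forces $|X| = md + nc$, so any failure of this identity rules out the existence of such $f$ and $g$. For condition (2), the factor $h_0$ has multidegree $(t,s)$, so for $h_0$ to divide $g$ of multidegree $(c,d)$ I need $c \ge t$ and $d \ge s$; a reverse strict inequality is therefore an immediate contradiction.

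For condition (3) I would treat the horizontal case and appeal to symmetry for the vertical case. Let $R$ be a horizontal ruling with $q < k_R < m$ points of $X$. Since $k_R < m$, $R$ is not among the $s$ full horizontal rulings and hence not a component of $V(g)$, so $g|_R = h_0|_R \cdot g_0|_R$ is a nonzero polynomial of degree $c$ in $x$. Its zero set on $R$ is the union of the $t$ intersection points of $R$ with the full vertical rulings and the at most $q$ roots of $g_0|_R$. Writing $e_R$ for the number of points of $X \cap R$ lying on a full vertical ruling, the remaining $k_R - e_R$ points of $X \cap R$ must lie in $V(g_0) \cap R$, which forces $k_R - e_R \le q$ and hence $e_R \ge k_R - q > 0$. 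A second application of Theorem~\ref{bezout} to $f$ and $g_0$ gives $|V(f)\cap V(g_0)| = mp + nq$, and an inclusion–exclusion count shows that every $X$-point not on any full ruling must lie in $V(f)\cap V(g_0)$; comparing the two sides forces the number $e$ of $X$-points on both a full horizontal and a full vertical ruling to vanish. The contradiction then comes from combining the local lower bound $e_R \ge k_R - q > 0$ and upper bound $e_R \le t$ with these global identities, which leaves no room for a $g_0$ whose restriction to $R$ can realize the prescribed distribution of points.

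The main obstacle is condition (3): once the local bound $k_R \le q + t = c$ is in hand, the subtle step is to couple it with the Bezout identity $|V(f) \cap V(g_0)| = mp + nq$ and the corner-count identity $e = 0$ to obstruct the existence of any compatible $g_0$ precisely in the range $q < k_R < m$. Conditions (1) and (2) are essentially bookkeeping once the degree data in Lemma~\ref{vcideg} and Proposition~\ref{numthry} are in place.
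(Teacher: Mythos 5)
Your treatment of parts (1) and (2) is correct and coincides with the paper's own argument: both follow immediately from Lemma~\ref{vcideg}, Proposition~\ref{numthry}, Theorem~\ref{bezout}, and the observation that the product $h_0$ of the line components, having multidegree $(t,s)$, must divide $g$.

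Part (3) contains a genuine gap, and it sits exactly at the step you flag as ``subtle.'' Your local analysis is right, and indeed more careful than the paper's: for a horizontal ruling $R$ with $q<k_R<m$ points, the points of $X\cap R$ not lying on one of the $t$ full vertical components of $V(g)$ must be zeros of $g_0|_R$, which gives $k_R-e_R\le q$ and hence $e_R\ge k_R-q>0$. But the contradiction you then claim does not follow from the global facts you invoke. The identity $e=0$ only forbids a point of $X$ from lying on both a full horizontal \emph{and} a full vertical component of $V(g)$; a point of $X\cap R$ sitting on a full vertical component violates nothing, because $R$ itself is not a component of $V(g)$. Likewise the count $|V(f)\cap V(g_0)|=mp+nq$ is exactly exhausted by the points of $X$ off the line components and places no constraint on $e_R$. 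In fact all of your inequalities are simultaneously satisfiable by an actual VCI: take $m=3$, $n=2$, $|X|=5$, let $g$ be the product of one $(0,1)$-form and one $(1,0)$-form and $f$ a general form of degree $(3,2)$, so that $s=t=1$, $c=d=1$, $p=q=0$, and $g_0$ is a nonzero constant. Each of the two horizontal rulings through the two points on the vertical component then carries exactly one point of $X$, so $k_R=e_R=1$ and $q<k_R<m$, yet $X=V(f)\cap V(g)$ is a virtual complete intersection. So no argument can force $e_R=0$ from these hypotheses.

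What your computation actually establishes is the weaker bound $k_R\le q+t=c$ for a non-full horizontal ruling (and $k_R\le p+s=d$ for a vertical one), and for what it is worth the paper's own two-sentence proof of (3) only bounds the points of $X\cap R$ lying on $V(g_0)$ and is subject to the same caveat. To obtain a provable statement you should either replace $q$ by $c$ (and $p$ by $d$) in condition (3), or restrict attention to rulings containing no point that lies on a full ruling in the transverse direction; with either modification your argument for the local bound goes through verbatim and the ``global identities'' step can be dropped.
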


\begin{proof}
  We will prove the contrapositive of (1). Assume that $K(f,g)$ is a virtual resolution for $S/I_X$. Then by Lemma~\ref{vcideg}, $\deg(f) = (m,n)$ and by Proposition~\ref{numthry}, $\deg(g) = (c,d)$. Thus Theorem~\ref{bezout} guarantees $|X| = dm + cn$.
  
If $X$ is a VCI of $f$ and $g$, then $g$ has $s$ horizontal line components, so $d\ge s$. Similarly since it has $t$ vertical line components, then $c\ge t$. Thus (2) is proved. 
\\\\ By Lemma~\ref{vcideg}, any horizontal ruling with fewer than $m$ points of $X$ cannot be contained in $V(g)$. However, a polynomial of multidegree $(p,q)$ cannot vanish on more than $p$ points of a horizontal ruling without containing the entire ruling. Analogously, $g$ cannot vanish on between $q$ and $n$ points of a vertical ruling, completing (3).
\end{proof}
Note that when $X$ is a VCI of $f$ and $g$ with $|X|<mn$ and $\deg(f)=(m,n)$, we have determined not only the multidegree of $g$ but also the multidegree of components that are not degree 1 lines, that is, $p$ and $q$ are intrinsically determined.

\par Using these restrictions on VCIs, it is possible to classify all possible configurations of VCIs when the parameters are small and eliminate a sizable number of cases in general. Nevertheless, it is important to keep in mind Remark~\ref{hyperbola}, which illustrates the limit of the applicability of combinatorics of configurations to determine VCIs when the size of the configuration gets sufficiently large (with respect to $m$ and $n$). 

\section{Complete Classifications of Points on at Most Three Rulings}\label{sec-compl-cla}
In this section, we give a complete classification of VCIs when all points lie on at most three rulings. The case where all points lie on a single ruling is automatically a VCI (in fact, a complete intersection), and we state the conditions for 2-ruling VCIs in Theorem \ref{2-ruling}, $3$-ruling VCIs in Theorem \ref{3-ruling}.

\begin{thm}\label{2-ruling}
  If all points lie on two horizontal rulings, they form a VCI if and only if either:
  \begin{enumerate}
  \item no two of them lie on the same vertical ruling, or

  \item both horizontal rulings contain the same number points.
  \end{enumerate}
\end{thm}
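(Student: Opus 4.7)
The plan is to derive this theorem directly from two of the main results already in hand: Theorem~\ref{same-mult} for the ``if'' direction and Theorem~\ref{cross} for the ``only if'' direction. No new constructions are required; the content lies entirely in checking which hypotheses fire in each configuration.

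For the ``if'' direction, I would split into the two listed cases. Case (2) is immediate: if both horizontal rulings contain the same number of points, then Theorem~\ref{same-mult} applies on the nose. Case (1) is handled by invoking the vertical-ruling half of Theorem~\ref{same-mult}: if no two points of $X$ share a vertical ruling, then every vertical ruling meeting $X$ meets it in exactly one point, and Theorem~\ref{same-mult} again applies. The only subtlety to flag is that ``the same number of points in each vertical ruling'' should be read as ``each vertical ruling containing a point of $X$,'' which is the natural meaning in context.

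For the ``only if'' direction, suppose the points lie on two horizontal rulings $\ell_1$ and $\ell_2$ containing $m_1$ and $m_2$ points respectively, with $m_1 \geq m_2$, and that neither (1) nor (2) holds. Failure of (2) gives $m_1 > m_2$, so $m = m_1$. Failure of (1) produces a vertical ruling $v$ containing at least two points of $X$; since $X \subset \ell_1 \cup \ell_2$, the ruling $v$ must meet both $\ell_1$ and $\ell_2$, so $n = 2$. Then
\[
|X| \;=\; m_1 + m_2 \;<\; 2m_1 \;=\; mn.
\]
Moreover, the point of $X$ at $\ell_1 \cap v$ lies simultaneously on a horizontal ruling with $m$ points and on a vertical ruling with $n$ points. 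Theorem~\ref{cross} then forces $X$ not to be a VCI.

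The argument is essentially a case-check with no serious obstacle: the hard work has already been done in Theorems~\ref{same-mult} and~\ref{cross}. The only care needed is the observation that when there are exactly two horizontal rulings, any vertical ruling meeting $X$ in more than one point automatically sees both rulings, which is what allows $n = 2$ and supplies the crossing point demanded by Theorem~\ref{cross}.
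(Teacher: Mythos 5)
Your proposal is correct and follows essentially the same route as the paper: the ``if'' direction via Theorem~\ref{same-mult} and the ``only if'' direction via Theorem~\ref{cross}, including the (correct) observation that a vertical ruling with two points must meet the larger horizontal ruling, which supplies the crossing point Theorem~\ref{cross} requires. The only cosmetic difference is that the paper handles case (2) with an explicit pair of forms (a $(0,2)$-form and a product of $(1,1)$-forms) rather than citing Theorem~\ref{same-mult} directly, but both arguments are valid and your verification of the hypotheses of Theorem~\ref{cross} is actually slightly more careful than the paper's.
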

\begin{proof}
If no two points lie on the same vertical ruling, then each vertical ruling contains exactly one point, so the configuration is a VCI by Theorem \ref{same-mult}. If both horizontal rulings contain the same number of points, we can match point $p_i$ in one ruling with point $p_i'$ in the other; then the configuration is the VCI of two horizontal lines through the two rulings and the product of $(1,1)$ forms each passing through one pair of $p_i$ and $p_i'$. 

Inversely, suppose two points lie on the same vertical ruling, and the two horizontal rulings have different numbers of points. Then the maximum number of points on the same horizontal ruling times 2 (the maximum number of points on the same vertical ruling) is greater than the total number of points. So by Theorem $\ref{cross}$, this configuration is not a VCI.
\end{proof}
\begin{thm}\label{3-ruling}
If all points lie on three horizontal rulings, the configuration is a VCI if and only if it satisfies one of the following conditions:
\begin{enumerate}
    \item All horizontal rulings contain the same number of points.
    \item All vertical rulings contain the same number of points.
    \item On two of the horizontal rulings all points are in pairs on the same vertical ruling, and no vertical rulings contain $3$ points.
    \item Two of the horizontal rulings contain the same number of points, $k$, and all points lie on a $(k,1)$-curve.
\end{enumerate}
\end{thm}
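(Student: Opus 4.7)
I would prove both directions of the biconditional.

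For sufficiency, conditions (1) and (2) follow directly from Theorem~\ref{same-mult}. For (3), let $H_1,H_2$ be the two paired horizontal rulings (each with $k$ points), let $V_1,\dots,V_k$ be the pair-verticals, let $H_3$ be the remaining ruling with $\ell$ points, and let $W_1,\dots,W_\ell$ be the verticals through the points of $H_3\cap X$. Take $f=V_1\cdots V_k\cdot H_3$ of bidegree $(k,1)$ and $g=H_1\cdot H_2\cdot W_1\cdots W_\ell$ of bidegree $(\ell,2)$. A component-wise intersection check, together with the Bezout count $k\cdot 2+1\cdot\ell=2k+\ell=|X|$ from Theorem~\ref{bezout}, shows $V(f)\cap V(g)=X$ as a reduced set, so $K(f,g)$ is a virtual resolution. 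For (4), take $f$ to be the defining form of the given $(k,1)$-curve $C$ and keep the same $g$; Theorem~\ref{bezout} again gives $2k+\ell$ intersection points, $C\cap H_i=H_i\cap X$ for $i=1,2$ by Bezout and the hypothesis that all $k$ points of each $H_i\cap X$ lie on $C$, and $C\cap W_j=\{q_j\}$ since $q_j\in C$.

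For necessity, let $a_1\ge a_2\ge a_3$ be the sizes of $X$ on the three horizontal rulings and let $n$ be the maximum number of points on a single vertical ruling, so $n\in\{1,2,3\}$. When $n=1$, condition (2) is immediate. When $n=3$, some vertical ruling meets all three horizontals; applying the contrapositive of Theorem~\ref{cross} to any point on this vertical gives $|X|\ge mn=3a_1$, and combined with $a_2,a_3\le a_1$ this forces $a_1=a_2=a_3$, i.e., condition (1).

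The bulk of the proof is the case $n=2$. Classify the vertical rulings into pair-verticals of type $(i,j)$ (meeting $H_i$ and $H_j$) and singleton-verticals of type $i$. In the subcase $|X|<mn=2a_1$, which forces $a_1>a_2$, Lemma~\ref{vcideg} pins down $\deg f=(a_1,2)$ and forces every linear factor of $g$ to be either an $a_1$-point horizontal ruling or a pair-vertical, while Theorem~\ref{cross} rules out any pair-vertical touching $H_1$. Writing $g$ as its linear factors times a residual $g_0$ with no horizontal or vertical linear factor and applying Theorem~\ref{bezout} together with the restrictions of $g_0$ to $H_2$ and $H_3$, one shows $\deg g_0=(0,0)$ and that there are no singletons on $H_2$ or $H_3$, yielding condition (3) with pairs between $H_2$ and $H_3$. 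In the subcase $|X|\ge 2a_1$, Lemma~\ref{vcideg} no longer applies; here I would restrict $f$ and $g$ to each horizontal ruling and use Theorem~\ref{bezout} on the resulting one-variable factorizations to show that any VCI must exhibit equal horizontal ruling sizes (condition (1)), equal vertical ruling sizes (condition (2)), a fully paired structure on two rulings (condition (3)), or an irreducible $(k,1)$-component of $f$ passing through all of $X$ (condition (4)). The main obstacle is this last subcase: when $a_1=a_2>a_3$ one must carefully track the contribution of each factor of $f$ and $g$ to the intersection on each horizontal ruling, and argue via Bezout that any such VCI either realizes the $(k,1)$-curve of (4) or reduces to one of (1), (2), (3).
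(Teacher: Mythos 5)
Your sufficiency argument is correct and essentially identical to the paper's: conditions (1) and (2) via Theorem~\ref{same-mult}, condition (3) via the two-block decomposition $f=V_1\cdots V_k\cdot H_3$, $g=H_1H_2W_1\cdots W_\ell$, and condition (4) via the $(k,1)$-curve against the same $g$, with the B\'ezout count $2k+\ell=|X|$ certifying that every intersection point is reduced. The necessity cases $n=1$ and $n=3$ also match the paper's treatment.

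The gap is in the $n=2$ case, which is where essentially all of the content of the theorem lives. You partition by $|X|<mn$ versus $|X|\ge mn$, and for the second (and much larger) subcase you only state that you would ``restrict $f$ and $g$ to each horizontal ruling and use B\'ezout,'' explicitly flagging the configuration $a_1=a_2>a_3$ as an unresolved obstacle. That is precisely the step that needs an argument: restriction to rulings does not by itself produce the dichotomy into conditions (1)--(4). The paper instead runs the case analysis on the bidegrees: with $\deg f=(a,b)$, $\deg g=(c,d)$ and $a\ge c$, Lemma~\ref{max-deg} gives $a\ge\alpha$, hence $d\le 2$ from $d\alpha\le ad+bc=|X|<3\alpha$; the case $b\ge 2$ forces $c\le\beta$ via $\alpha+\beta+\gamma=ad+bc\ge\alpha+2c$, and then either a contradiction (when $c<\beta$, since $V(g)$ would have to contain all three horizontal rulings while $d<3$) or $(a,b)=(\alpha,2)$, $(c,d)=(\beta,1)$ with $\beta=\gamma$, yielding (3); the case $b\le 1$ forces $d=2$, $c=\gamma$, $\alpha=\beta>\gamma$, yielding (4); and the degenerate subcases $b=0$ or $d=0$ give (2). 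The key tool throughout is that a form of $x$-degree $c$ vanishing at more than $c$ points of a horizontal ruling must contain the whole ruling, which is what converts the B\'ezout inequalities into structural statements about $V(g)$. None of this bookkeeping appears in your plan. Your $|X|<mn$ subcase via Lemma~\ref{vcideg} is plausible and parallels part of the paper's first case, but the split by $|X|$ versus $mn$ does not isolate the difficulty: all four conditions can occur with $|X|\ge mn$, so the unproved subcase is in effect the whole necessity direction.
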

\begin{proof}
All four conditions are sufficient. Conditions (1) and (2) follow from Theorem \ref{same-mult}. Configurations satisfying condition (3) can always be decomposed into two rectangular blocks and be seen as an intersection of two forms that resemble the structure in Figure \ref{two-boxes}. That is, $V(f)$ consists of the two paired horizontal rulings and vertical rulings through each remaining point, and $V(g)$ consists of vertical rulings through each pair of points and a horizontal ruling through the remaining points.
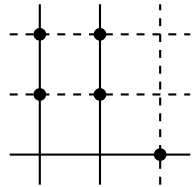
\begin{figure}[H]
\centering
\begin{tikzpicture}[scale=.8]
%f (not dashed)
\foreach \x in {1,2}
	\draw[thick] (\x, 0.5) -- (\x, 3.5);
\foreach \y in {1}
	\draw[thick] (.5, \y) -- (3.5, \y);
%g (dashed)
\foreach \x in {3}
	\draw[thick,dashed] (\x, 0.5) -- (\x, 3.5);
\foreach \y in {2,3}
	\draw[thick,dashed] (.5, \y) -- (3.5, \y);
%points
\coordinate (a) at (1,3);
\coordinate (b) at (2,3);
\coordinate (c) at (1,2);
\coordinate (d) at (2,2);
\coordinate (e) at (3,1);
\foreach \v in {a,b,c,d,e}
	\fill (\v) circle (3pt);
\end{tikzpicture}
\caption{$f$ is the product of solid lines; $g$ of dashed lines.}\label{two-boxes}
\end{figure}
Condition (4) can always be decomposed into the intersection of a $(k,1)$-curve, and the union of two horizontal lines and one vertical line through each point of $X$ on the remaining ruling, as demonstrated in Figure \ref{case_4}. By Theorem~\ref{bezout}, the $(k,1)$-curve will intersect the two horizontal rulings in exactly the $k$ points each, and will intersect each vertical ruling in exactly one point, as desired. The vertical rulings need to be carefully chosen so that their intersections with the curve lie on the same horizontal ruling.
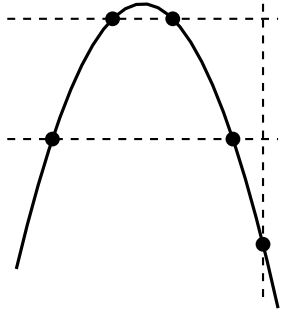
\begin{figure}[H]
\centering
\begin{tikzpicture}[yscale=.4, xscale=.4]
%f (not dashed)
\myparabola{-4}{-8}{1}{-0.5}{4}{-8}
%g (dashed)
\foreach \x in {4}
	\draw[thick,dashed] (\x, 0) -- (\x, -10);
\foreach \y in {-0.5, -4.5}
	\draw[thick,dashed] (-4.5, \y) -- (4.5, \y);
%points
\coordinate (a) at (-3,-4.5);
\coordinate (b) at (3,-4.5);
\coordinate (c) at (-1,-0.5);
\coordinate (d) at (1,-0.5);
\coordinate (e) at (4,-8);
\foreach \v in {a,b,c,d,e}
	\fill (\v) circle (7pt);
\end{tikzpicture}
\caption{$f$ is the solid parabola; $g$ is the product of dashed lines.}\label{case_4}
\end{figure}

We now show that these are the only cases. Assume $X$ is a VCI that satisfies none of the conditions above. Let $n$ be the maximum number of points on a single vertical ruling so $n$ may be $1,2$ or $3$. We will show that in each case $X$ must satisfy one of the conditions above.

Notice that if $n = 1$, then condition (2) is satisfied.
If $n=3$, one of the $3$ points on such a ruling will also be on the horizontal ruling with the maximal number $m$ of points. Theorem \ref{cross} implies $X$ will not be a VCI unless all three horizontal rulings contain $m$ points, in which case (1) holds.

We now consider the the most difficult case of $n=2$. Assume the three horizontal rulings contain $\alpha\geq \beta\geq \gamma$ points, respectively. Notice that $\alpha, \beta, \gamma$ are not all equal since otherwise $X$ satisfies condition (1). Since $X$ is a VCI, $K(f,g)$ is a virtual resolution of $S/I_X$ for some $f,g$ of multidegrees $(a,b)$ and $(c,d)$. According to Lemma \ref{max-deg}, $\max\{a,c\}\geq \alpha$ and $\max\{b,d\}\geq 2$. Without loss of generality, suppose $a\ge c$. Then $a\geq\alpha$. This means $d<3$ since $3\alpha>|X|=ad+bc\geq d \alpha$.

There are two cases:\par
\textit{Case 1:} $b\geq 2$. If $d=0$, then $V(g)$ is the union of vertical rulings. By Theorem \ref{bezout}, the number of intersection points of $V(f)$ and each vertical ruling in $V(g)$ is $b$, so the configuration must satisfy (2).

Therefore, $d>0$. Notice that $g$ having a high degree in $y$-dimension implies a low degree in $x$-dimension. In particular, degree $c\leq \beta$, because
\begin{equation}
  \alpha+2\beta\ge \alpha+\beta+\gamma=|X|=ad+bc\geq \alpha+2c \tag{6.1} \label{case1ineq}
\end{equation}
(using $a\geq\alpha$ and $b\geq 2$ in the last step).

If $c<\beta$, then $V(g)$ must contain the entirety of the horizontal rulings of $\alpha$ points and $\beta$ points.
This is because when restricting to those rulings $g$ would be a polynomial of degree $c$, and so could vanish at $\beta$ points only if it vanished on the whole ruling.
Thus, we have $d\geq 2$, and so $\alpha+\beta+\gamma=ad+bc \ge 2\alpha+2c$. 
From this we find $c<\gamma$, and again using the same argument, $V(g)$ vanishes on all 3 horizontal rulings, which violates the fact that $d<3$.
\par On the other hand, if $c=\beta$, then using the inequality (\ref{case1ineq}) and recalling $b \ge 2,d>0$, we have
\[c=\beta=\gamma, \hspace{1cm} a=\alpha, \hspace{1cm} b=2, \hspace{1cm} \text{and } \hspace{1cm}d=1.\] 
In this case, $\alpha>\beta=c$, so $V(g)$ must contain the entire horizontal ruling of $\alpha$ points. Since $d=1$, the rest of $V(g)$ can only consist of vertical rulings, and there must be $\beta$ of them. Each vertical ruling intersects $V(f)$ at 2 points by Theorem~\ref{bezout}, and so the configuration satisfies (3).

\textit{Case 2:} $b<2$. Since $\max\{b,d\}\geq 2$, $d$ must equal 2. If $b=0$, then $V(f)$ is the union of multiple vertical rulings. Since the number of intersections of $V(g)$ and each vertical ruling is always $c$, the configuration must satisfy (2). Thus, $b=1$. As before, notice that
$$\alpha+\beta+\gamma=|X|=ad+bc\geq 2\alpha+c.$$
Then, $c\leq \gamma$. If $c<\gamma$, then $V(g)$ has to contain all three horizontal rulings, which contradicts $d=2$. 
Therefore, $c=\gamma$, and $\alpha=\beta>\gamma$. In this case, $V(g)$ must contain the entire horizontal rulings of $\alpha$ points and $\beta$ points. Since $d=2$, the rest of $V(g)$ can only be vertical rulings, and there must be $\gamma$ of them, and recalling $b=1$, the configuration satisfies (4).
\end{proof}

\begin{bibdiv}
\begin{biblist}
\bib{ahlfors}{book}{
  author={Ahlfors, Lars},
  title={Complex Analysis},
  subtitle={An Introduction to the Theory of Analytic Functions of One Complex Variable},
  publisher={McGraw-Hill Inc., New York},
  date={1979},
  pages={78},
  edition={3rd},
}

\bib{2017arXiv170307631B}{article}{
author = {Berkesch, Christine},
author = {Erman, Daniel},
author = {Smith, Gregory G.},
title = {Virtual Resolutions for a Product of Projective Spaces},
year = {2017},
eprint = {arXiv:1703.07631},
}

\bib{cox_using_2005}{book}{
   author={Cox, David A.},
   author={Little, John B.},
   author={O'Shea, Donal},
   title={\href{https://link.springer.com/book/10.1007/978-1-4757-6911-1}%
     {Using algebraic geometry}},
   series={Graduate Studies in Mathematics~124},
   volume={185},
   publisher={Spring, New York},
   date={2005},
   pages={xii+572}
}

\bib{CLS}{book}{
   author={Cox, David A.},
   author={Little, John B.},
   author={Schenck, Henry K.},
   title={\href{http://dx.doi.org/10.1090/gsm/124}%
     {Toric varieties}},
   series={Graduate Studies in Mathematics~124},
   publisher={Amer. Math. Soc., Providence, RI},
   date={2011},
   pages={xxiv+841}
}

\bib{eisenbud_geometry_2005}{book}{
  author={Eisenbud, David},
  title={\href{http://doi.org/10.1007/b137572}%
  {The geometry of syzygies: a second course in commutative algebra
    and algebraic geometry}},
  series={Graduate Texts in Mathematics~229},
  publisher={Springer Science+Business Media, Inc, New York},
  date={2005},
  pages={x-222},
}

\bib{FGM}{article}{
   author={Favacchio, Giuseppe},
   author={Guardo, Elena},
   author={Migliore, Juan},
   title={On the arithmetically Cohen-Macaulay property for sets of points
   in multiprojective spaces},
   journal={Proc. Amer. Math. Soc.},
   volume={146},
   date={2018},
   number={7},
   pages={2811--2825}
}

\bib{GMR}{article}{
   author={Giuffrida, S.},
   author={Maggioni, R.},
   author={Ragusa, A.},
   title={On the postulation of $0$-dimensional subschemes on a smooth
   quadric},
   journal={Pacific J. Math.},
   volume={155},
   date={1992},
   number={2},
   pages={251--282}
}

\bib{GMR-resolutions}{article}{
   author={Giuffrida, S.},
   author={Maggioni, R.},
   author={Ragusa, A.},
   title={Resolutions of $0$-dimensional subschemes of a smooth quadric},
   conference={
      title={Zero-dimensional schemes},
      address={Ravello},
      date={1992},
   },
   book={
      publisher={de Gruyter, Berlin},
   },
   date={1994},
   pages={191--204}
}

\bib{Guardo-FatPoints}{article}{
   author={Guardo, Elena},
   title={Fat points schemes on a smooth quadric},
   journal={J. Pure Appl. Algebra},
   volume={162},
   date={2001},
   number={2-3},
   pages={183--208}
}

\bib{GVT08}{article}{
   author={Guardo, Elena},
   author={Van Tuyl, Adam},
   title={ACM sets of points in multiprojective space},
   journal={Collect. Math.},
   volume={59},
   date={2008},
   number={2},
   pages={191--213}
}

\bib{guardo_arithmetically_2015}{book}{
  author={Guardo, Elena},
  author={Van Tuyl, Adam},
  title={\href{http://dx.doi.org/10.1007/978-3-319-24166-1}%
    {Arithmetically Cohen--Macaulay sets of points in $\mathbb{P}^{1} \times
      \mathbb{P}^{1}$}},
  series={SpringerBriefs in Mathematics},
  publisher={Springer, Cham},
  date={2015},
  pages={viii+134}
}

\bib{GVT-FatPoints}{article}{
   author={Guardo, Elena},
   author={Van Tuyl, Adam},
   title={Fat points in $\Bbb P^1\times\Bbb P^1$ and their Hilbert
   functions},
   journal={Canad. J. Math.},
   volume={56},
   date={2004},
   number={4},
   pages={716--741}
}

\bib{GVT11}{article}{
   author={Guardo, Elena},
   author={Van Tuyl, Adam},
   title={Separators of arithmetically Cohen-Macaulay fat points in $\bold
   P^1\times\bold P^1$},
   journal={J. Commut. Algebra},
   volume={4},
   date={2012},
   number={2},
   pages={255--268}
}

\bib{GVT-separators}{article}{
   author={Guardo, Elena},
   author={Van Tuyl, Adam},
   title={Separators of points in a multiprojective space},
   journal={Manuscripta Math.},
   volume={126},
   date={2008},
   number={1},
   pages={99--13}
}

\bib{GVT-Minres}{article}{
   author={Guardo, Elena},
   author={Van Tuyl, Adam},
   title={The minimal resolutions of double points in $\Bbb P^1\times\Bbb
   P^1$ with ACM support},
   journal={J. Pure Appl. Algebra},
   volume={211},
   date={2007},
   number={3},
   pages={784--800}
}

\bib{hartshorne}{book}{
  author={Hartshorne, Robin},
  title={\href{http://dx.doi.org/10.1007/978-1-4757-3849-0}%
    {Algebraic geometry}},
  series={Graduate Texts in Mathematics~52},
  publisher={Springer-Verlag, New York-Heidelberg},
  date={1977},
  pages={xvi+496},
}

\bib{harris_algebraic_1995}{book}{
    AUTHOR = {Harris, Joe},
    TITLE = {\href{https://www.springer.com/us/book/9780387977164}%
      {Algebraic geometry}},
    SERIES = {Graduate Texts in Mathematics},
    VOLUME = {133},
      NOTE = {A first course,
              Corrected reprint of the 1992 original},
 PUBLISHER = {Springer-Verlag, New York},
      YEAR = {1995},
     PAGES = {xx+328}
}

\bib{M2}{misc}{
  label={M2},
  author={Grayson, Daniel~R.},
  author={Stillman, Michael~E.},
  title={Macaulay2, a software system for research
    in algebraic geometry},
  publisher = {available at \url{http://www.math.uiuc.edu/Macaulay2/}},
}

\bib{peeva_graded_2011}{book}{
  AUTHOR = {Peeva, Irena},
  TITLE = {\href{https://link.springer.com/book/10.1007/978-0-85729-177-6}%
  {Graded syzygies}},
    SERIES = {Algebra and Applications},
    VOLUME = {14},
 PUBLISHER = {Springer-Verlag London, Ltd., London},
      YEAR = {2011},
     PAGES = {xii+302}
}

\bib{shafarevich_basic_2013}{book}{
  author={Shafarevich, Igor~R.},
  title={\href{http://dx.doi.org/10.1007/978-3-642-37956-7}%
    {Basic Algebraic Geometry I}},
  subtitle={Varieties in Projective Space},
  publisher={Springer-Verlag, New York-Heidelberg},
  date={1977},
  pages={247},
  edition={3rd},
}

\bib{VT}{article}{
   author={Van Tuyl, Adam},
   title={The Hilbert functions of ACM sets of points in $\Bbb
   P^{n_1}\times\dots\times\Bbb P^{n_k}$},
   journal={J. Algebra},
   volume={264},
   date={2003},
   number={2},
   pages={420--441}
}

 \end{biblist}
\end{bibdiv}

 \raggedright

\Addresses
\end{document}